\documentclass[11pt]{article}
\usepackage{amsthm, amsmath, amssymb, amsfonts, url, booktabs, tikz, setspace, fancyhdr, enumerate}
\usepackage[margin = 1in]{geometry}

\usepackage[hidelinks]{hyperref}
\usepackage{cleveref}

\usepackage[title]{appendix}

\newtheorem{theorem}{Theorem}[section]

\newtheorem{lemma}[theorem]{Lemma}
\newtheorem{corollary}[theorem]{Corollary}
\newtheorem{conjecture}[theorem]{Conjecture}
\newtheorem{claim}[theorem]{Claim}

\theoremstyle{definition}

\theoremstyle{remark}



\newcommand{\U}{\mathcal{U}}



\tikzstyle{p}+=[fill=black, circle, minimum width = 1pt, inner sep =
1pt]
\tikzstyle{w}+=[fill=white, draw, circle, minimum width = 1pt, inner sep =
1.5pt]

\def\abhi#1{}
\let\abhi=\abhiOpt

\begin{document}

\title{A proof of the Elliott--R\"{o}dl conjecture on hypertrees \\ in Steiner triple systems}

\author{
Seonghyuk Im\thanks{Department of Mathematical Sciences, KAIST, South Korea and Extremal Combinatorics and Probability Group (ECOPRO), Institute for Basic Science (IBS). Email:{\tt seonghyuk@kaist.ac.kr}}
\and
Jaehoon Kim\thanks{Department of Mathematical Sciences, KAIST, South Korea. Email: {\tt jaehoon.kim@kaist.ac.kr}.}
\and 
Joonkyung Lee\thanks{Department of Mathematics, Hanyang University, Seoul, South Korea and Extremal Combinatorics and Probability Group (ECOPRO), Institute for Basic Sciences (IBS).
E-mail: {\tt
joonkyunglee@hanyang.ac.kr}.}
\and
Abhishek Methuku\thanks{School of Mathematics, University of Birmingham,
United Kingdom.
E-mail: {\tt
abhishekmethuku@gmail.com}.}
}

\date{}

\maketitle

\begin{abstract}
Hypertrees are linear hypergraphs where every two vertices are connected by a unique path.
Elliott and R\"{o}dl conjectured that for any given $\mu>0$, there exists $n_0$ such that the following holds. 
Every $n$-vertex Steiner triple system contains all hypertrees with at most $(1-\mu)n$ vertices whenever $n\geq n_0$. We prove this conjecture.
\end{abstract}

\section{Introduction}

A key question in extremal graph theory is to find (almost-)spanning subgraphs $H$ in a host graph~$G$ chosen from a certain class of graphs. 
Perhaps the simplest case along these lines may be to find (almost-)perfect matchings or (almost-)spanning trees, but even these cases have led us to a profound and intricate theory. 

When the host graph $G$ is dense, there have been extensive studies relating the existence of such (almost-)spanning subgraphs and the minimum degree of $G$.
One of the earliest examples may be Dirac's theorem, which states that the minimum degree condition $\delta(G)\geq n/2$ implies the existence of Hamiltonian cycles and paths. 
Another well-known example is that
every $n$-vertex graph with the minimum degree $\delta(G)$ contains a matching with at least $\min\{\delta(G),\lfloor n/2 \rfloor\}$ edges. In particular, $\delta(G)\geq n/2$ implies that there exists a perfect matching in a graph, provided that $n$ is even. 
This fact was subsequently generalized by Brandt~\cite{Brandt1994}, who proved that a graph $G$ contains every forest $F$ with at most $\delta(G)$ edges and at most $n$ vertices. 
Note that this also includes another well-known fact that every graph $G$ contains all trees with at most $\delta(G)$ edges. 
There have been more results along these lines, e.g., a theorem by Koml\'os, S\'ark\"{o}zy, and Szemer\'edi \cite{Komlos2001} which shows the existence of spanning trees under certain degree conditions or the bandwidth theorem by B\"ottcher, Schacht and Taraz~\cite{Boettcher2009}.

In contrast, finding (almost)-spanning structures in a sparse host graph $G$ often turns out to be impossible in general, 
which forces one to consider more restricted classes of graphs. For example, a classical theorem of P\'osa \cite{Posa1976}, also obtained by Koml\'{o}s and Szemer\'{e}di \cite{KS1983}, states that ``typical" graphs with at least $(1+o(1))n\log n$ edges contain a Hamilton cycle.
Recently, Montgomery \cite{Montgomery2019} proved that typical graphs with $\Omega(n\log n)$ edges contain all spanning trees with bounded maximum degree. Other classes of graphs have also been considered, e.g., a ``resilience" version of these results \cite{Balogh2011, Lee-sudakov2012} 
or variants for randomly perturbed graphs \cite{Bohman-Frieze-Martin03, Boettcher2019, Joos-Kim20, Krivelevich-Kwan-Sudakov17}. 

Although it is known that some large minimum codegree conditions on hypergraphs ensure existence of perfect matchings, spanning trees or cycles \cite{Kuhn-Mycroft-Osthus10, Pavez-signe-Sanhueza-Matamala-Stein21, rodl-Rucinski-Szemeredi06, rodl-Rucinski-Szemeredi08, rodl-Rucinski-Szmeredi09}, 
the problem of finding spanning (or almost spanning) subhypergraphs in hypergraphs is fundamentally different from graphs. 
Our main goal is to explore this relatively less discovered area by proving the existence of an almost-spanning ``hypertree" in a well-known class of ``sparse" 3-uniform hypergraphs, the so-called \emph{Steiner triple systems}.
To further discuss our result, we first clarify what hypertrees are.

In what follows, we restrict our attention only to $3$-uniform hypergraphs (or $3$-graphs briefly). A $3$-graph is \textit{linear} if every pair of distinct edges has at most one vertex in common. A \textit{hypertree} is a connected, linear $3$-graph in which any two vertices are connected by a unique path (see \Cref{fig:bare_path}). Equivalently, a hypertree can be obtained by recursively adding edges such that each new edge intersects the current set of vertices in exactly one vertex.  A \emph{matching} in a $3$-graph $H$ is a collection of pairwise disjoint edges of $H$ and a \textit{perfect matching} is a matching that covers all the vertices in $H$.

\begin{figure}
    \centering
    \includegraphics[width=\textwidth]{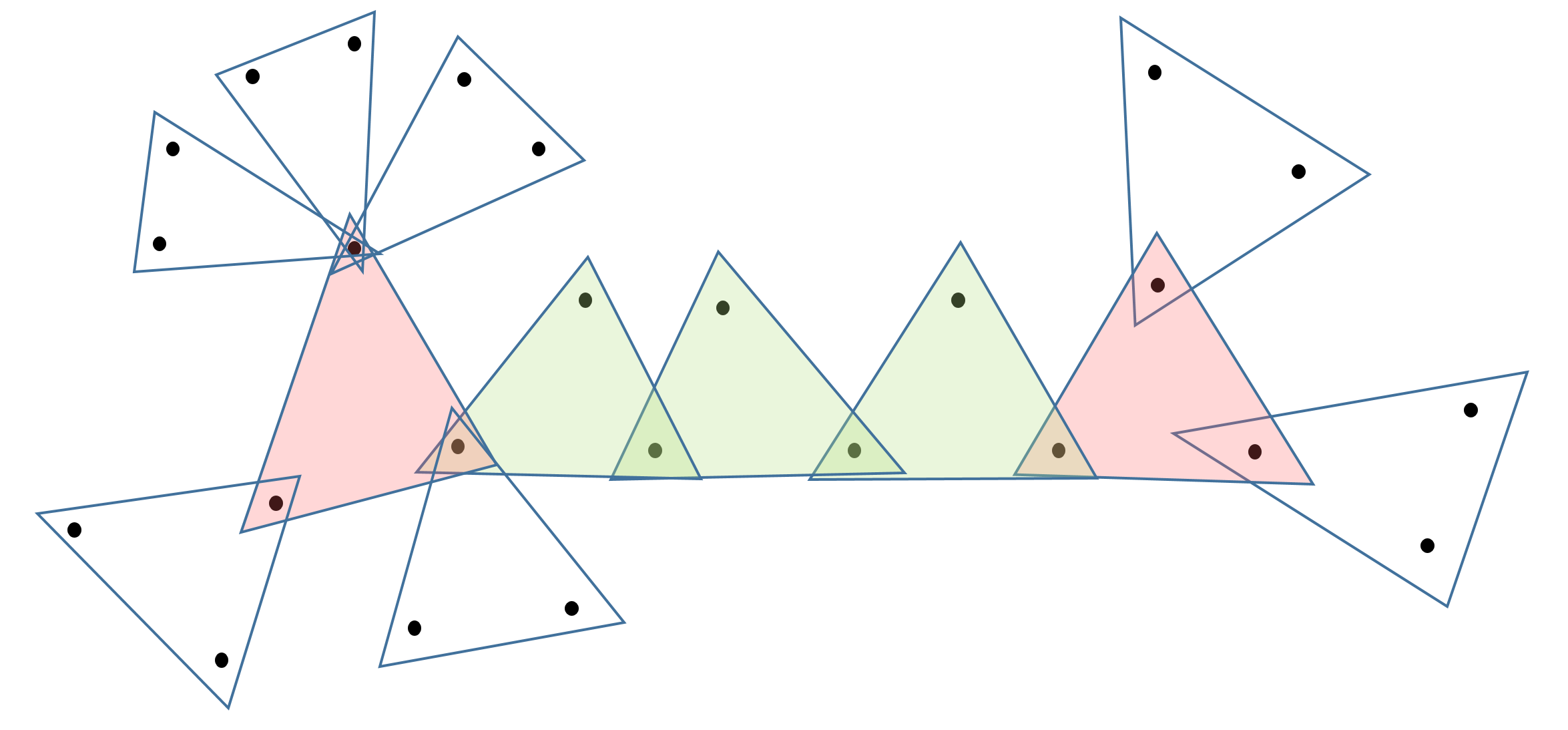}
    \caption{An example of a hypertree}
    \label{fig:bare_path}
\end{figure}

As mentioned before, (spanning) hypertrees in hypergraphs behave in a fundamentally different way from the graph case.
For example, any connected graph contains at least one spanning tree; however, for $3$-graphs, this may not be the case. First, a $3$-graph with an even number of vertices has no spanning tree at all, as a hypertree always has an odd number of vertices.
Even if we assume that the number of vertices is odd to set aside the parity issue, connectivity alone is still not sufficient to find a large hypertree. For example, a hypergraph with $2k+1$ edges sharing the same two vertices forms a simplest $3$-graph with no hypertree having more than one edge.
For another example illustrating the difference, a simple greedy algorithm finds all trees with $\delta(G)$ edges in a graph $G$ with the minimum degree $\delta(G)$, but the same algorithm for $3$-graphs only yields all hypertrees with at most $\frac{1}{2}(\delta(G)+1)$ edges. 

On the other hand, Goodall and de Mier \cite{goodall-deMier2011} proved that every $3$-graph with odd number of vertices where every pair belongs to at least one edge contains at least one spanning hypertree. The extremal cases in their theorem are the $3$-graphs where every pair of vertices belongs to exactly one hyperedge, i.e., Steiner triple systems.

teiner triple systems can be seen as an analogue of complete graphs (where all possible types of spanning trees exist) for linear $3$-graphs. However, even though the Goodall--de Mier theorem proves that every Steiner triple system contains a spanning hypertree (in fact, they proved that an $n$-vertex Steiner triple system contains at least $\Omega((n/6)^{n/12})$ spanning trees), it is far from the truth that every Steiner triple system contains all types of spanning hypertrees. 
For example, it is known \cite{bryant2015steiner} that for infinitely many odd $n$, there exist $n$-vertex Steiner triple systems with no perfect matching. 
Thus, any hypertree containing a perfect matching (there are super-exponentially many such hypertrees) cannot be found in those Steiner triple systems. This motivates the following natural question: what is the largest number $t$ such that any $n$-vertex Steiner triple system contains all hypertrees on $t$ vertices? 

This extremal question is hard even for matchings, which have a lot simpler structure than arbitrary hypertrees. 
A famous forty-year-old conjecture of Brouwer~\cite{brouwer1981size} states that every $n$-vertex Steiner triple system contains a matching covering $n-4$ vertices. 
This conjecture remains open and the best bound known so far is the recent progress by Keevash, Pokrovskiy, Sudakov and Yepremyan~\cite{keevash2022}, proving that any $n$-vertex Steinter triple system contains a matching covering at least $n- O(\frac{\log n}{\log\log n})$ vertices. 
These results on matchings already allude to the fact that determining the exact value of $t$ may be out of reach at the moment. 

In 2019, Elliot and R\"odl asked an ``asymptotic" question, which appears to be the very first step towards determining the exact value of $t$.
\begin{conjecture}[Elliott and  R\"{o}dl \cite{Elliott2019}]
\label{Elliott_Rodl}
Given $\mu\geq 0$, there exists $n_0=n_0(\mu)$ such that if $n \geq n_0$, $T$ is any
hypertree on $n$ vertices, and $S$ is any Steiner triple system on $m\geq n(1+\mu)$ vertices, then
T is a subhypergraph of S.
\end{conjecture}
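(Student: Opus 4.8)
The plan is to use the absorption method combined with a random greedy embedding, adapted to the sparse setting of Steiner triple systems. Since a Steiner triple system $S$ on $m$ vertices is a linear $3$-graph in which every pair of vertices lies in exactly one edge, it is locally sparse (every vertex has degree $(m-1)/2$), so we cannot embed $T$ all at once. Instead, I would split the hypertree $T$ into a ``bulk'' part $T_0$ consisting of almost all of $T$, together with a bounded number of ``flexible'' leaf-edges or short pendant subtrees that will be used for absorption. The rough scheme is: (1) reserve a small random subset of vertices of $S$ (or a random sub-configuration) to serve as an absorbing structure; (2) embed the bulk $T_0$ greedily, edge by edge, following the recursive structure of a hypertree — each new edge meets the current image in exactly one vertex, so embedding one edge amounts to finding an unused edge of $S$ through a prescribed vertex avoiding all previously used vertices; (3) finish by absorbing the leftover: use the reserved absorbing structure to complete the embedding of the few remaining pendant pieces of $T$ while covering exactly the right vertices.

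The key steps in order would be as follows. First, set up the partition of $T$: since $T$ has maximum degree possibly as large as linear in $n$, I cannot assume bounded degree, so I would instead find within $T$ a large collection of vertex-disjoint pendant edges (leaf edges whose removal leaves a smaller hypertree) — any hypertree on $n$ vertices has $\Omega(n)$ leaves, hence $\Omega(n)$ such edges — and designate a small constant fraction of $T$'s vertices as being covered by flexible pendant structures. Second, prove the core greedy/random-embedding lemma: if we have embedded a linear-sized-but-not-too-large portion of $T$ into $S$ so that the used vertex set is suitably ``spread out'' (quasirandomly distributed, so that no vertex of $S$ has too many of its pairs already blocked), then any next edge can be extended; the point is that the number of edges through a given vertex is $(m-1)/2 = \Omega(n)$ while the number of vertices used so far times the co-degree bound is a smaller order, so a valid choice exists, and choosing it randomly keeps the quasirandomness invariant (this is a martingale/concentration argument). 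Third, build the absorber: one needs a gadget that, for a few designated pendant subtrees $P_1,\dots,P_t$ of $T$ and a small set $W$ of ``excess'' vertices of $S$ not covered by the bulk embedding, can absorb $W$ by re-routing the embedding of the $P_i$'s so that they cover exactly $W$ together with their attachment points. Concatenating many such local gadgets (found greedily in the reserved random set using the same extension lemma, plus a bipartite-matching or hypergraph-matching step to pair up leftover vertices with gadgets) yields the global absorber.

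The main obstacle, I expect, is the construction and analysis of the absorbing structure in this setting. Unlike dense host graphs, here each absorbing gadget has very limited ``local choice'': the codegree is exactly one, so a gadget that swaps a covered vertex for an uncovered one must exploit paths in $S$ rather than direct edge-substitutions, and one must ensure that a random reservoir of vertices contains enough such path-gadgets, with enough diversity of endpoints, to cover an arbitrary $o(m)$-sized leftover set. Making the absorber ``robust'' — able to handle any small leftover set, not a fixed one — requires either an absorber-with-fractional-relaxation argument (reduce to finding a perfect matching in an auxiliary near-regular bipartite or hypergraph structure, using the Brouwer-type / Keevash--Pokrovskiy--Sudakov--Yepremyan technology or a Pippenger--Spencer nibble) or the ``distributive absorption'' framework of Montgomery, adapted to $3$-graphs. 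A secondary difficulty is that $T$ can have vertices of linear degree, so at the moment we embed a high-degree vertex $v$ of $T$, we must simultaneously place many edges through the image of $v$ in $S$; this forces the extension lemma to be about finding many disjoint edges through a single vertex of $S$ simultaneously (a near-perfect matching in the link graph of that vertex, with a few vertices forbidden), which must be interleaved carefully with the order in which edges of $T$ are processed — so the ordering of the greedy embedding, the ``spreadness'' invariant it maintains, and the interface with the absorber all have to be engineered together rather than modularly.
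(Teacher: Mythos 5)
Your plan is an absorption-based strategy, which is a genuinely different route from the paper, and as written it has gaps that would be hard to fill. The paper instead follows the layered embedding strategy of Montgomery--Pokrovskiy--Sudakov for rainbow trees: it splits $T$ into a nested chain $T_0 \subseteq T_1 \subseteq \cdots \subseteq T_\ell = T$ with $T_0$ of size $O(\mu n)$, $T_1$ obtained by attaching large stars to $T_0$, and each further layer obtained by adding either a matching of leaf edges or a small collection of bare paths of length $3$ (Lemma~\ref{lem_tree_split}). It then randomly partitions $V(G)$ into a reservoir $R$ together with layers $X_1,\dots,X_\ell$, embeds $T_0$ greedily, finds the stars, and extends layer by layer into the $X_i$ (using Pippenger's theorem for the matching layers), with $R$ used only to complete $o(n)$ missing pieces at each step. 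There is no flexible absorber, and the quasirandomness invariant across an almost-spanning greedy embedding --- which your bulk step would need to maintain, and which is delicate --- is sidestepped entirely.

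Three concrete gaps. First, your greedy embedding of the bulk must handle vertices of $T$ of degree $\Theta(n)$. You correctly flag this, and your fix (a near-perfect matching in the link) would handle a single such vertex; but the actual difficulty is finding \emph{many vertex-disjoint} large stars with prescribed sizes at prescribed centers simultaneously. Greedy fails here, and the paper needs a dedicated switching/augmenting-path argument (Lemma~\ref{lem_embedding_stars}, in the spirit of Woolbright and Brouwer--de~Vries--Wieringa) to prove it. Second, whenever you sample a random vertex subset that must contain a fixed star, the leaves of a star edge come in pairs, so under independent vertex sampling both leaves survive with probability only $p^2$, not $p$; a linear-sized star loses most of its edges. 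The paper introduces the ``singleton-pair partition'' --- grouping each star edge's two leaves into a single sampling unit --- exactly to fix this, and nothing in your proposal plays that role. Third, the absorber itself, which you identify as the main obstacle, is left unspecified; given codegree exactly $1$ in a Steiner triple system it is not clear what a robust swap gadget would even look like, and the paper avoids needing one by keeping the amount of cleanup per layer small enough for a greedy argument in $R$.
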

The conjecture implies that, although there are super-exponentially many spanning hypertrees which cannot be found in some Steiner triple systems, a completely different behavior should exist for slightly smaller hypertrees.
 Elliott and  R\"{o}dl gave a positive evidence for  the conjecture by proving it for special types of hypertrees called \textit{subdivision trees}.
Later, Arman, R\"{o}dl, and Sales \cite{arman2021colourful,arman2022} proved Conjecture~\ref{Elliott_Rodl} for two different classes of hypertrees called \textit{$d$-ary hypertrees} and \textit{turkeys}. 
Our main result is to prove Conjecture~\ref{Elliott_Rodl} completely (i.e for all hypertrees).

\begin{theorem}\label{main_thm}
 For every $\varepsilon>0$, there exists $n_0=n_0(\varepsilon)$ such that every $n$-vertex Steiner triple system $G$ with $n \geq n_0$ contains every hypertree with at most $(1-\varepsilon) n$ vertices.
\end{theorem}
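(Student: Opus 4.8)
The plan is to use an absorption strategy combined with random greedy embedding, following the general philosophy that has been successful for spanning structures in dense (hyper)graphs but adapted to the sparse, linear setting of Steiner triple systems. Write $T$ for the target hypertree on $t\leq (1-\varepsilon)n$ vertices, and $G$ for the $n$-vertex Steiner triple system. First I would split $T$ into a small "absorbing" part and a large "flexible" part: since a hypertree is built by recursively attaching edges at a single vertex, one can locate in $T$ a collection of $\Theta(\varepsilon n)$ pairwise far-apart "pendant" sub-structures (leaf edges or short leaf paths hanging off a vertex of small degree) whose removal leaves a still-connected hypertree $T'$ with a designated set of anchor vertices. The remaining $\Theta(\varepsilon n)$ unused vertices of $G$ give us room both to build absorbers and to tolerate the inevitable waste of a greedy process.

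The core of the argument is a \textbf{greedy embedding lemma}: if we process the edges of a hypertree in the natural "attach one new edge at one old vertex" order, then at each step we need to extend a partial embedding by choosing, for an already-embedded vertex $v$ and a new vertex to be placed, an edge of $G$ through the image of $v$ avoiding all previously used vertices. In a Steiner triple system every vertex lies in exactly $(n-1)/2$ edges, and these edges partition the remaining $n-1$ vertices into pairs; so as long as we have used only $(1-\varepsilon')n$ vertices, a constant fraction of the edges through any vertex are still "free". The subtlety is that when the new edge must reach a \emph{specified} target vertex (as happens when closing up the absorber connections), a single vertex typically has only one edge to that target, so one cannot be greedy there. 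This is handled by keeping, throughout the embedding, a reservoir of many vertices in "general position" and using a connecting/switching argument: to join a vertex $u$ to a prescribed vertex $w$ by a short path, we use that in a Steiner triple system the pair $\{u,x\}$ and $\{x,w\}$ lie in edges for \emph{every} choice of intermediate $x$, and a counting/defect-Hall argument over the reservoir produces the required vertex-disjoint connecting paths. I expect to formalize this as: (i) embed $T'$ greedily using only "free" extensions, tracking a set of anchor images; (ii) for each pendant structure of $T$, find its image disjointly using the reservoir; (iii) absorb.

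The main obstacle — and the step I would budget the most effort for — is making the absorption work in the \emph{linear} hypergraph setting, because the usual trick of building many vertex-disjoint "absorber gadgets" that can each swallow one extra vertex relies on having many edges through a given pair, which Steiner triple systems emphatically do not have (each pair is in exactly one edge). So the absorbers here must themselves be short hypertree-paths, and one must show that $G$ contains enough flexible, vertex-disjoint copies of these path-gadgets with the right connectivity to reroute around any small set of "leftover" vertices; establishing that such gadgets are abundant in an arbitrary Steiner triple system (not a random or quasirandom one) is where the real work lies, and I would likely need a robust version of the Goodall--de Mier type spanning argument or an iterative-absorption scheme à la Keevash--Pokrovskiy--Sudakov--Yepremyan, exploiting only the defining property that every pair is covered exactly once. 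Secondary difficulties are bookkeeping the "far-apart" decomposition of an arbitrary hypertree (which can have vertices of very large degree, forcing care in how many pendant pieces hang off a single vertex) and controlling accumulated error terms so that the $\varepsilon n$ slack genuinely suffices.
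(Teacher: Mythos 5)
Your proposal takes a genuinely different route from the paper (absorption plus random greedy versus a structured decomposition-and-random-partition scheme), but it contains two gaps that are not secondary bookkeeping issues but the actual heart of the problem.

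First, the greedy embedding lemma is not correct as stated. You write that after using $(1-\varepsilon')n$ vertices, a constant fraction of the $(n-1)/2$ edges through a fixed vertex remain free. In a Steiner triple system the edges through $v$ partition $V(G)\setminus\{v\}$ into pairs, so each used vertex destroys a pair; after $(1-\varepsilon)n$ vertices are used, the deterministic lower bound on free pairs is $(n-1)/2-(1-\varepsilon)n$, which is negative for $\varepsilon<1/2$, and the adversarial number of free pairs can genuinely be zero. One can rescue an expected count of roughly $\varepsilon^2 n/2$ free pairs only if the used vertices are distributed pseudorandomly, which is an assumption you would have to engineer and maintain, not a free consequence of greediness. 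Worse, this $\varepsilon^2 n$ bound immediately kills the greedy for any vertex of $T$ whose degree exceeds roughly $\varepsilon^2 n/2$, and a hypertree can have a vertex of degree up to $(1-\varepsilon)n/2$ (e.g.\ a single large star). You file this under ``secondary difficulties,'' but it is one of the two central obstructions: the paper isolates the high-degree vertices, embeds their stars \emph{first}, and needs a bespoke switching lemma (Lemma~\ref{lem_embedding_stars}, in the spirit of Woolbright's and Brouwer--de Vries--Wieringa's Latin-square transversal arguments) to show that an arbitrary Steiner triple system admits vertex-disjoint stars of prescribed sizes at prescribed centers. There is no greedy or probabilistic shortcut here, because a star of size $cn$ through one vertex occupies a constant fraction of all of $G$.

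Second, you explicitly flag that you do not know how to build absorbers in an arbitrary Steiner triple system, since each pair lies in exactly one edge, and you defer this to ``the real work.'' That is a hole, not a proof. The paper in fact sidesteps classical vertex absorption entirely: since the target tree is only almost-spanning, it suffices to reserve a small random set $R\sim\U_{\varepsilon/200}$ for \emph{local} repairs, and to carry out the bulk of the embedding via a decomposition $T_0\subseteq T_1\subseteq\cdots\subseteq T_\ell$ into a tiny initial piece, large stars, matchings, and length-3 bare paths (Lemma~\ref{lem_tree_split}), embedded into a random \emph{singleton-pair} vertex partition $X_1,\dots,X_\ell,R$ (the pairing of star leaves is essential for the relevant concentration to hold). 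The matching steps are the second genuinely non-greedy ingredient: they are handled by verifying the hypotheses of Pippenger's theorem on the auxiliary tripartite hypergraph $G[A,X_{i,1},X_{i,2}]$, and this near-perfect-matching step has no analogue in your outline. So while your high-level intuitions about reservoirs and connecting paths point in the right direction, the two lemmas that actually make the argument go through --- the disjoint-stars switching lemma and the Pippenger-based matching embedding on the paired random partition --- are precisely the pieces your proposal leaves unconstructed.
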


\section{An outline of the proof}

Our proof of \Cref{main_thm} is partly motivated by the rainbow tree embedding theorem of Montgomery, Pokrovskiy, and Sudakov~\cite{Montgomery2020}, which states that there exists a ``rainbow" copy of every tree $T$ with at most $(1-o(1))n$ vertices in a properly edge-colored copy of $K_n$.
As we consider $3$-uniform linear hypergraphs instead of graphs, there are technical challenges to overcome, which will be discussed in due course.

Let $T$ be a hypertree with $(1-o(1))n$ vertices and let $G$ be an $n$-vertex Steiner triple system. We first decompose $T$ into $T_0 \subseteq T_1 \subseteq \cdots \subseteq T_\ell = T$ such that 
\begin{enumerate}[(1)]
    \item \label{it:o(1)} $|T_0| = o(n)$,
    \item $\ell=O(\text{polylog}(n))$,
    \item\label{large_star} $T_1$ is obtained by adding ``large" stars to $T_0$, and
    \item\label{path_or_matching} each $T_i$, $i>1$, is obtained by adding either $o(n)$ many paths of length $3$ whose endpoints are in $T_{i-1}$ or a matching where each edge contains exactly one vertex in $T_{i-1}$.\label{tree_spliting_4}
\end{enumerate} 
We will embed $T$ into $G$ by first embedding $T_0$ (in Step 0), and then extend $T_{i-1}$ to $T_{i}$ (in Step $i$) for each $i \ge 1$. We will treat the initial steps, embedding $T_0$ and extending $T_0$ to $T_1$, with extra care and execute the remaining steps inductively.
More precisely, we first embed $T_0$ by a deterministic greedy algorithm and collect a set $\mathcal{S}$ of ``large" vertex-disjoint stars  centered at vertices in the copy of $T_0$ (where a star centered at a vertex $v$ is a set of edges whose pairwise intersection is $\{v\}$). Here the stars we find are in fact almost spanning, i.e. ``larger" than the large stars described in \eqref{large_star} due to a technical reason.
We then partition all the $(1-o(1))n$ vertices of $G$ that are not in the copy of $T_0$ into $R\cup X_1\cup\cdots\cup X_\ell$ in a randomized way while also making sure that the two leaves in every edge of every star in $\mathcal{S}$ lie in the same part of the partition.
Given the partition, ``most" vertices in $T_i\setminus T_{i-1}$ are embedded into $X_i$ and the remaining vertices are embedded into the ``reservoir" $R$ at each step.

\medskip

Let us take a closer look at each of the steps. First, let us consider Steps 0, 1.
The decomposition $T_0 \subseteq T_1 \subseteq \cdots \subseteq T_\ell$ of $T$ closely follows the ideas of~\cite{Montgomery2020}.
Finding $\mathcal{S}$ is not hard either; it follows from adapting the switching technique in Section 8 of~\cite{Montgomery2020}, originally introduced by Woolbright \cite{Woolbright1978} and Brouwer, de Vries, and Wieringa \cite{MR480083}.
A technical problem occurs when we build the vertex set $X_1$.
In~\cite{Montgomery2020}, $X_1$ was simply taken by choosing each vertex independently at random, but in our $3$-graph setting, 
a straightforward application of this approach no longer captures the structures of stars that we need, as the ``leaves" of a star in a $3$-graph are pairs of vertices rather than singletons; for instance, too few edges of a star will be selected if $X_1$ is taken by choosing vertices uniformly at random. To resolve this issue, we pair the leaf vertices of each star and we either select both vertices in each pair, or we select neither of them while choosing the random subset $X_1$.

Now let us consider Step $i$ for $i >1$. Let $\{X_{i, 1}, X_{i, 2}\}$ be a random partition of $X_i$. For each $i>1$, Step $i$ uses the following properties of the partition $R\cup (X_{1,1}\cup X_{1,2})\cup (X_{2,1}\cup X_{2,2})\cup\cdots\cup(X_{\ell,1}\cup X_{\ell,2})$, which can be obtained by using standard concentration inequalities. For each $i >1$ and $j\in [2]$, 
\begin{enumerate}[(i)]
    \item every vertex has ``many" neighbors in $X_{i,j}$ as well as in $R$, and \label{random_1}
    \item for every pair of large enough disjoint sets $A, B \subseteq V(G) \setminus X_{i,j}$, the number of edges of the form $\{a,b,c\}$ with  $a \in A$, $b \in B$ and $c \in X_{i,j}$ is close to their expectation. \label{random_2}
\end{enumerate}
To perform \eqref{path_or_matching}, one should find $o(n)$ paths of length three or a matching to extend $T_{i-1}$ to $T_{i}$.
In the former case, it is not hard to find $o(n)$ paths of length three by using \eqref{random_1} and \eqref{random_2} above, and this follows the ideas of~\cite{Montgomery2020} closely. 
However, we use a completely different approach for finding a matching to extend $T_{i-1}$ to $T_i$. 
 Using \eqref{random_2} and Pippenger's hypergraph matching theorem, we embed most of the edges of the matching using vertices in $X_i = X_{i, 1} \cup X_{i,2}$.
Then we embed the remaining edges of the matching using vertices in $R$ by making use of ~\eqref{random_1}.

\medskip

\textbf{Organization.}
We introduce
preliminary results in Section~\ref{sec_preliminary}. 
We show \eqref{random_1} and \eqref{random_2} are true for our random process and use it to embed matchings and bare paths in Section~\ref{sec_matching}.
In Section~\ref{sec_stars}, we show that we can embed large vertex-disjoint stars into a Steiner triple system with prescribed centers.
Finally, we put everything together to iteratively find an embedding of $T$ in Section~\ref{sec_combine}.
The proof of hypertree splitting lemma is illustrated in Appendix~\ref{sec_tree_partition}.

\section{Preliminaries and notation}\label{sec_preliminary}

As outlined in the previous section, we need to decompose a hypertree $T$ into $T_0\subseteq T_1\subseteq\cdots\subseteq T_\ell=T$, following the approach taken in~\cite{Montgomery2020}.
To this end, we introduce some auxiliary definitions.

First of all, let us clarify our definition of paths. 
A \emph{(Berge-)path of length $\ell$} in a hypergraph is a sequence $v_1,e_1,v_2,\dots,e_\ell,v_{\ell+1}$ of distinct vertices and edges such that $v_i,v_{i+1}\in e_i$ for all $i\in [\ell]$. In particular, a path in a linear $3$-graph $G$ is a subgraph of $G$ on $2\ell+1$ vertices $\{v_0,v_1,\cdots,v_\ell\}\cup\{u_1,\cdots,u_{\ell}\}$ such that each $\{v_i,u_{i+1},v_{i+1}\}$ is an edge for $i=0,1,\cdots,\ell-1$.
Each of the two pairs $\{v_0,u_1\}$ and $\{u_\ell,v_\ell\}$ in the first and the last edges of $P$ is called an \emph{end pair} of $P$.

A hypergraph $T$ is a hypertree if and only if there is a unique path between any pair of distinct vertices. It is straightforward to see that a hypertree is a linear hypergraph.
A \emph{$u$--$v$ path} $P$ (or a path $P$ between $u$ and $v$) means a path $P$ paired with the specified end vertices $u$ and~$v$, each of which is chosen from each end pair, respectively.
The vertices other than $u$ and $v$ in a $u$--$v$ path $P$ are called the \emph{internal} vertices of $P$.

In a hypertree $T$, a \emph{bare path} $P$ is a subhypergraph of $T$ such that it is a $u$--$v$ path of length $\ell \geq 2$ where no edges in $T\setminus E(P)$ are incident to the internal vertices of $P$.
For example, the green edges in \Cref{fig:bare_path} form a bare path but green edges plus one of the red edges does not.

A \emph{leaf} of a hypertree $T$ is a vertex $v\in V(T)$ of degree one such that the edge $e$ containing $v$ has another vertex $u$ of degree one. That is, removing $u$ and $v$ from $T$ produces a subhypertree of~$T$.
The edges that contain a leaf are called \emph{leaf edges} of $T$.
In particular, the number of leaves of $T$ is always even unless $T$ is a single edge.

A \emph{star} of size $D$ is a 3-graph $S$ on $2D+1$ vertices $\{v\}\cup\{u_1,\cdots,u_D\}\cup\{w_1,\cdots,w_D\}$ such that each triple $\{v,u_i,w_i\}$ is an edge for $i \in [D]$.
The vertex $v$ of degree $D$ is called the \emph{center} of $S$.
A \emph{matching}~$M$ is a collection of pairwise disjoint edges. 
If a matching $M$ consists of leaf edges of a hypertree~$T$, the set of leaves of $T$ in $M$ is called the \emph{leaf set} of $M$. 
We simply say that a vertex subset $X$ of $T$ is a  \emph{matching leaf set} of $T$ if $X$ is the leaf set of a matching $M$ in $T$.

\medskip

Using these terminologies, the following lemma formalizes \eqref{it:o(1)}--\eqref{path_or_matching} in the previous section by adapting the tree splitting lemma in Section~4 of~\cite{Montgomery2020} to our linear 3-graph setting.
\begin{lemma}\label{lem_tree_split}
    Let $D, n \geq 2$ be integers and let $0<\mu<1$.
    For any hypertree $T$ with at most $n$ edges and $2n+1$ vertices, there exist integers $\ell \leq 10^5 D \mu^{-2}$ and $s \in [\ell]$ and a sequence of subgraphs $T_0 \subseteq T_1 \subseteq \cdots T_\ell = T$ such that the following holds:
    \begin{enumerate}[(i)]
       \item\label{it:T0} $T_0$ has at most $\mu n$ edges and at most $3\mu n$ vertices.
       \item\label{it:T1} $T_1$ is obtained by adding stars of size at least $D$ to $T_0$; that is, take pairwise vertex-disjoint stars of size at least $D$ and identify their centers with vertices in $T_0$.
       \item\label{it:matching} For $i \notin\{ 0, s\}$, 
       $T_{i+1}$ is obtained by adding a matching to $T_i$ such that $V(T_{i+1})\setminus V(T_i)$ is a matching leaf set of $T_{i+1}$.
       \item\label{it:path} $T_{s+1}$ is obtained by adding at most $\mu n$ vertex-disjoint bare paths of length $3$ to $T_s$ 
       such that every bare path we add is a $u$--$v$ path $P$ where $u,v\in V(T_s)$, and $V(P) \setminus \{u,v\}$ is disjoint from $V(T_s)$. 
   \end{enumerate}
\end{lemma}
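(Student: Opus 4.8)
The plan is to mimic the tree splitting lemma of Montgomery, Pokrovskiy, and Sudakov, working with the bipartite-like "skeleton" of the hypertree. First I would pass to the auxiliary graph $T'$ obtained from $T$ by replacing each edge $\{a,b,c\}$ by a small gadget (e.g.\ a cherry, or by contracting each edge to a single "edge-vertex" adjacent to its three vertices), so that $T'$ is an ordinary tree and $T$-substructures correspond to $T'$-substructures: hypertree edges become specified vertices, stars become graph stars, bare paths of length $3$ become bare paths, and matching leaf sets stay matching leaf sets. Since $T$ has at most $n$ edges, $T'$ has $O(n)$ vertices, and the blow-up is by a bounded factor, so it suffices to produce the decomposition at the level of $T'$ and translate back.

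\smallskip

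\noindent\textbf{Step 1: extract $T_0$ and the large stars giving $T_1$.} Repeatedly pull off vertices of $T$ lying in "small" components after deleting the high-degree part. Concretely, let $U$ be the set of vertices $v$ of $T$ whose star (set of edges through $v$) has size $\ge D$; the number of such vertices is at most $n/D$ (each such vertex uses $D$ distinct edges, and $\sum_v \deg_T(v) = 3\cdot\#\text{edges} \le 3n$). For each $v \in U$ greedily carve out a vertex-disjoint sub-star of size exactly $D$ (possible since a Steiner-type double-counting shows at most a bounded number of the $D$ leaf-pairs at $v$ can be blocked by previously chosen stars, because $T$ is linear and we only need a constant fraction). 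Let $T_1^{\mathrm{stars}}$ be the union of these stars. Removing the (interior) leaves of these stars from $T$ leaves a forest; I would take $T_0$ to be the union of that forest's components that are "large" — more precisely, process as in \cite{Montgomery2020}: after deleting leaves one notes the resulting hypertree still has bounded-size pieces hanging off $T_0$, and one folds the small pieces into the later $X_i$'s. The count $\ell \le 10^5 D\mu^{-2}$ will come out of the geometric/arithmetic bookkeeping exactly as in the graph case, with $D$ entering because each "large star" step removes $\ge D$ edges per center.

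\smallskip

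\noindent\textbf{Step 2: decompose $T \setminus T_1$ into matchings and one batch of bare paths.} After $T_1$ is removed, what remains is attached to $T_1$ in a controlled way. Order the vertices of $T\setminus V(T_1)$ by a rooted BFS from $T_1$; repeatedly peel off a maximal matching of currently-available leaf edges (those whose both new endpoints are leaves of the current residual hypertree), which is exactly item \eqref{it:matching}: $V(T_{i+1})\setminus V(T_i)$ is then a matching leaf set of $T_{i+1}$ by construction. Each peel reduces the number of edges, so after $O(\text{depth})$ rounds we are done — but the depth can be $\Theta(n)$, so naively this gives too many steps. To control $\ell$, I would first cut all long bare paths: identify the $\le \mu n$ vertex-disjoint bare paths of length $\ge 3$ and reserve one batch (step $s$, item \eqref{it:path}) to re-insert bare paths of length exactly $3$; after contracting every maximal bare path to length $2$, the residual hypertree has at most $O(\#\text{leaves}) = O(n)$ edges but bounded "reduced depth", giving the required number of matching rounds. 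This is the step where the bound $\ell \le 10^5 D\mu^{-2}$ is actually pinned down, and it is essentially a transcription of the corresponding estimate in \cite{Montgomery2020}, with the constant chosen generously.

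\smallskip

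\noindent\textbf{Main obstacle.} The genuine difference from the graph setting is \eqref{it:matching}: we need each added matching to be not just an arbitrary matching but one whose new vertices form a \emph{matching leaf set}, i.e.\ each new edge must attach to $T_i$ at exactly one (already-embedded) vertex and contribute exactly two fresh leaves. In a $3$-graph the edge through a would-be leaf has \emph{two} other vertices, and if only one of them is new we do not get a valid "add a leaf" move in the hypertree sense — so the peeling in Step 2 must respect the linear-hypergraph structure, and one has to check that such leaf edges always exist in sufficient quantity (equivalently, that the residual object is always a hypertree with enough leaves, using that a hypertree that is not a single edge has an even, and positive, number of leaves). Verifying that this peeling terminates within the claimed number of rounds while keeping the bare-path batch small is the crux; everything else is routine bookkeeping carried over from the tree splitting lemma of \cite{Montgomery2020}. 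The details are given in Appendix~\ref{sec_tree_partition}.
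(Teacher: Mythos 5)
Your proposal identifies several correct ingredients (peel off matching leaf sets, extract large stars at high-degree vertices, insert a single batch of length-$3$ bare paths, and reduce to a graph-tree argument via an auxiliary structure), but there is a genuine gap at the heart of Step 2 that the paper's actual proof is built around. You write ``identify the $\le \mu n$ vertex-disjoint bare paths of length $\ge 3$'' as if such a small family existed a priori. It does not: the number of maximal bare paths in $T$ is controlled only by the number of leaf edges, which may be $\Theta(n)$. The paper's route first performs up to $50mD/\mu$ rounds of matching-leaf-set removal, \emph{before} any bare paths are touched, precisely to drive the number of parent vertices (and hence leaf edges) down to $O(\mu n / (mD))$; only then is the (hypertree analogue of the) Montgomery--Pokrovskiy--Sudakov path-extraction lemma applied to a hypertree $S$ that now has few leaf edges. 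Without that preliminary reduction, the bound $e(S-P_1-\cdots-P_r)\le \mu n/2$ and the eventual $e(T_0)\le\mu n$ simply do not follow, and the count $\ell\le 10^5 D\mu^{-2}$ is not established. Your phrase ``after contracting every maximal bare path to length $2$, the residual hypertree has ... bounded reduced depth'' is also not justified: contraction controls path lengths but not the branching, so the number of matching rounds is still unbounded without the leaf-edge reduction.

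Two smaller issues. First, your Step 1 claim that removing the interior leaves of the carved-out stars ``leaves a forest'' is incorrect: the star centers remain in $T$, so what remains is still a connected hypertree, not a forest, and ``take $T_0$ to be the union of the forest's components that are large'' has no counterpart in a correct argument. (The paper instead arrives at $T_0$ as the terminal member $T'_\ell$ of a decreasing sequence, after many matching and bare-path rounds, and the star removal is the last deletion step.) Second, you outsource the hypertree version of the bare-path extraction lemma to ``transcription''; the actual adaptation is nontrivial — the paper passes to the BFS tree of the line graph of the hypertree and works with \emph{semi-bare} paths of length $m+1$ (a weaker notion than bare paths), exactly because in a $3$-graph the edges at the ends of a bare path already carry extra vertices. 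That argument needs to be carried out, and your gadget reduction (replace each $3$-edge by a cherry or contraction) is not shown to preserve all four required sub-structures simultaneously, in particular the matching-leaf-set condition that you yourself flag as the ``main obstacle.''
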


Although this lemma does not seem to follow directly from~\cite{Montgomery2020}, the proof closely resembles theirs. Thus, we will give a brief proof in Appendix~\ref{sec_tree_partition}.

\medskip

We will frequently use standard concentration inequalities, which can be found, for example, in~\cite{MR3524748}.

\begin{lemma}[One-sided Chernoff Bound]\label{lem_oneside_chernoff}
    Let $X_1, \cdots, X_n$ be mutually independent Bernoulli random variables and let $X  = \sum_{i=1}^n X_i$. Then, 
    \begin{align*}
        \mathbb{P}(X \geq (1+\varepsilon)\mathbb{E}[X]) & \leq \exp\left(-\frac{\varepsilon^2}{\varepsilon+2}\mathbb{E}[X]\right) \enspace \text{ for every } \enspace \varepsilon>0, \text{ and }\\
        \mathbb{P}(X \leq (1-\varepsilon)\mathbb{E}[X]) & \leq \exp\left(-\frac{\varepsilon^2}{2}\mathbb{E}[X]\right) \enspace \text{ for every } \enspace \varepsilon \in (0, 1).
    \end{align*}
\end{lemma}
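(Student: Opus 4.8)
This is the classical Chernoff bound, and the plan is to run the standard exponential-moment (Bernstein) argument. Write $p_i=\mathbb{P}(X_i=1)$ and $\mu=\mathbb{E}[X]=\sum_{i=1}^n p_i$; we may assume $\mu>0$, since otherwise every $X_i$ equals $0$ almost surely and both inequalities hold trivially. In each of the two cases I would first bound the relevant tail probability by an exponential moment via Markov's inequality, factor that moment using independence, apply $1+x\le e^x$ to each factor, and finally optimize over a free parameter $t$; the resulting bound will be of the form $\exp(-\mu\,\phi(\varepsilon))$ for an explicit $\phi$, and it then remains only to compare $\phi$ with the claimed exponent by an elementary one-variable inequality.

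For the upper tail, fix $\varepsilon>0$ and $t>0$. By Markov applied to $e^{tX}$, independence, and $1+x\le e^x$,
\[
\mathbb{P}\bigl(X\ge(1+\varepsilon)\mu\bigr)\le e^{-t(1+\varepsilon)\mu}\,\mathbb{E}\bigl[e^{tX}\bigr]=e^{-t(1+\varepsilon)\mu}\prod_{i=1}^n\bigl(1+p_i(e^t-1)\bigr)\le\exp\Bigl(\mu\bigl(e^t-1-t(1+\varepsilon)\bigr)\Bigr).
\]
Choosing $t=\ln(1+\varepsilon)>0$, which minimizes the exponent, gives
\[
\mathbb{P}\bigl(X\ge(1+\varepsilon)\mu\bigr)\le\exp\Bigl(-\mu\bigl((1+\varepsilon)\ln(1+\varepsilon)-\varepsilon\bigr)\Bigr),
\]
so it suffices to prove $(1+\varepsilon)\ln(1+\varepsilon)-\varepsilon\ge\frac{\varepsilon^2}{\varepsilon+2}$ for every $\varepsilon>0$. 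I would do this by letting $g(\varepsilon)$ be the difference of the two sides, observing $g(0)=g'(0)=0$, computing $g''(\varepsilon)=\frac{1}{1+\varepsilon}-\frac{8}{(\varepsilon+2)^3}$, and noting that $g''\ge0$ is equivalent to $(\varepsilon+2)^3\ge8(1+\varepsilon)$, which holds because both sides agree at $\varepsilon=0$ and the left side has the larger derivative on $[0,\infty)$; convexity then yields $g\ge0$.

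For the lower tail, fix $\varepsilon\in(0,1)$ and $t>0$, and repeat the argument with $e^{-tX}$ in place of $e^{tX}$:
\[
\mathbb{P}\bigl(X\le(1-\varepsilon)\mu\bigr)\le e^{t(1-\varepsilon)\mu}\prod_{i=1}^n\bigl(1+p_i(e^{-t}-1)\bigr)\le\exp\Bigl(\mu\bigl(e^{-t}-1+t(1-\varepsilon)\bigr)\Bigr).
\]
Taking $t=-\ln(1-\varepsilon)>0$ gives
\[
\mathbb{P}\bigl(X\le(1-\varepsilon)\mu\bigr)\le\exp\Bigl(\mu\bigl(-\varepsilon-(1-\varepsilon)\ln(1-\varepsilon)\bigr)\Bigr),
\]
and here the required inequality $-\varepsilon-(1-\varepsilon)\ln(1-\varepsilon)\le-\frac{\varepsilon^2}{2}$ follows cleanly from the power series $(1-\varepsilon)\ln(1-\varepsilon)=-\varepsilon+\sum_{j\ge2}\frac{\varepsilon^j}{j(j-1)}$, since the left side then equals $-\sum_{j\ge2}\frac{\varepsilon^j}{j(j-1)}=-\frac{\varepsilon^2}{2}-\sum_{j\ge3}\frac{\varepsilon^j}{j(j-1)}\le-\frac{\varepsilon^2}{2}$. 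The whole proof is routine; the only mildly fussy points are the two terminal one-variable inequalities, and of these the upper-tail one is the more annoying, since (unlike the lower-tail case) it has no transparent term-by-term power-series proof and instead needs the short convexity check above.
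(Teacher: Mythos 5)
Your proof is correct, and it is the standard exponential-moment (Chernoff/Bernstein) argument; the paper does not prove this lemma but cites it to Alon--Spencer, where essentially this same derivation appears. Both the convexity check for the upper-tail exponent and the power-series comparison for the lower-tail exponent are sound.
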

The following corollary will be enough in most of the applications.
\begin{corollary}[The Chernoff Bound]\label{lem_chernoff}
Let $X_1, X_2, \cdots, X_n$ be i.i.d. Bernoulli random variables and let $X= \sum_{i=1}^n X_i$. Then for $\varepsilon \in (0, 1)$, $$\mathbb{P}(|X-\mathbb{E}[X]| \geq \varepsilon \mathbb{E}[X] ) \leq 2\exp\left(-\frac{\varepsilon^2 \mathbb{E}[X]}{3}\right).$$     
\end{corollary}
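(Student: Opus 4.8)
The final statement in the excerpt is Corollary 3.5 (labeled `lem_chernoff`), "The Chernoff Bound": if $X_1,\dots,X_n$ are i.i.d.\ Bernoulli and $X=\sum_i X_i$, then for $\varepsilon\in(0,1)$, $\mathbb{P}(|X-\mathbb{E}[X]|\ge\varepsilon\mathbb{E}[X])\le 2\exp(-\varepsilon^2\mathbb{E}[X]/3)$.

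The plan is to deduce the two-sided bound directly from the one-sided estimates in \Cref{lem_oneside_chernoff} by a union bound, observing that i.i.d.\ Bernoulli variables are in particular mutually independent Bernoulli variables, so \Cref{lem_oneside_chernoff} applies to $X=\sum_{i=1}^n X_i$. First I would note that the event $\{|X-\mathbb{E}[X]|\ge\varepsilon\mathbb{E}[X]\}$ is the disjoint union of the upper-tail event $\{X\ge(1+\varepsilon)\mathbb{E}[X]\}$ and the lower-tail event $\{X\le(1-\varepsilon)\mathbb{E}[X]\}$, so its probability is at most the sum of the two tail probabilities.

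For the upper tail, \Cref{lem_oneside_chernoff} gives $\mathbb{P}(X\ge(1+\varepsilon)\mathbb{E}[X])\le\exp\paren{-\frac{\varepsilon^2}{\varepsilon+2}\mathbb{E}[X]}$, and since $\varepsilon\in(0,1)$ we have $\varepsilon+2<3$, hence $\frac{\varepsilon^2}{\varepsilon+2}\ge\frac{\varepsilon^2}{3}$, so this probability is at most $\exp\paren{-\frac{\varepsilon^2}{3}\mathbb{E}[X]}$. For the lower tail, \Cref{lem_oneside_chernoff} gives $\mathbb{P}(X\le(1-\varepsilon)\mathbb{E}[X])\le\exp\paren{-\frac{\varepsilon^2}{2}\mathbb{E}[X]}\le\exp\paren{-\frac{\varepsilon^2}{3}\mathbb{E}[X]}$ since $\tfrac12\ge\tfrac13$. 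Adding the two bounds yields $\mathbb{P}(|X-\mathbb{E}[X]|\ge\varepsilon\mathbb{E}[X])\le 2\exp\paren{-\frac{\varepsilon^2}{3}\mathbb{E}[X]}$, as claimed.

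This is a routine consequence and I do not expect any real obstacle; the only point requiring a moment's care is the elementary inequality $\frac{\varepsilon^2}{\varepsilon+2}\ge\frac{\varepsilon^2}{3}$ on the range $\varepsilon\in(0,1)$, which follows immediately from $\varepsilon+2<3$, together with recording that the two one-sided events together cover $\{|X-\mathbb{E}[X]|\ge\varepsilon\mathbb{E}[X]\}$ so that a union bound applies.
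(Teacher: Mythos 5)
Your derivation is correct and is exactly the standard argument the authors intend: the paper states \Cref{lem_chernoff} as an immediate corollary of \Cref{lem_oneside_chernoff} without writing out a proof. Splitting the two-sided event into the two tails, invoking the respective one-sided bounds, and weakening both exponents to $\varepsilon^2/3$ via $\varepsilon+2<3$ and $\tfrac12\ge\tfrac13$ is precisely the expected deduction.
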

Given a probability space $\Omega = \prod_{i=1}^n \Omega_i$ and a random variable $X: \Omega \rightarrow \mathbb{R}$, $X$ is \textit{$k$-Lipschitz} if $|X(\omega)-X(\omega')| \leq k$ whenever $\omega$ and $\omega'$ differ in at most one coordinate.
\begin{lemma}[Azuma's inequality]\label{lem_azuma}
If $X: \prod_{i=1}^n \Omega_i \rightarrow \mathbb{R}$ is $k$-Lipschitz, then $$ \mathbb{P}(|X-\mathbb{E}[X]| > t) \leq 2\exp\left(-\frac{t^2}{k^2 n}\right). $$
\end{lemma}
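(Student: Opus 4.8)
The plan is to run the standard Doob-martingale argument: this statement is (a slightly lossy form of) McDiarmid's bounded differences inequality, and I will obtain it by bounding the moment generating function of $X-\mathbb{E}[X]$ along the natural filtration, the key point being that the product structure $\Omega=\prod_{i=1}^n\Omega_i$ with independent coordinates is exactly what makes the martingale increments conditionally bounded. Write $\omega=(\omega_1,\dots,\omega_n)$ for a generic point of $\Omega$, let $\mathcal{F}_i$ be the $\sigma$-algebra generated by the first $i$ coordinates, and set $X_i:=\mathbb{E}[X\mid\mathcal{F}_i]$, so that $X_0=\mathbb{E}[X]$ is a constant, $X_n=X$, and $(X_i)_{i=0}^n$ is a martingale. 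Put $D_i:=X_i-X_{i-1}$; then $\mathbb{E}[D_i\mid\mathcal{F}_{i-1}]=0$ and $X-\mathbb{E}[X]=\sum_{i=1}^n D_i$.

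First I would show that, conditionally on $\mathcal{F}_{i-1}$, the increment $D_i$ is supported in an interval of length at most $k$. Fix $\omega_1,\dots,\omega_{i-1}$ and regard $g(t):=\mathbb{E}[X\mid\omega_1,\dots,\omega_{i-1},\omega_i=t]$ as a function of $t\in\Omega_i$; by independence of the coordinates this conditional expectation is genuinely an average over the later coordinates $\omega_{i+1},\dots,\omega_n$ only. For any two values $t,t'$ the two configurations differ only in coordinate $i$, so the $k$-Lipschitz hypothesis gives $|X(\dots,t,\dots)-X(\dots,t',\dots)|\le k$ pointwise in the remaining coordinates, and hence $|g(t)-g(t')|\le k$; that is, $\sup g-\inf g\le k$. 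Since $X_{i-1}=\mathbb{E}_{\omega_i}[g(\omega_i)]$ is, given $\mathcal{F}_{i-1}$, a constant lying between $\inf g$ and $\sup g$, the variable $D_i=g(\omega_i)-X_{i-1}$ is supported in the interval $[\inf g-X_{i-1},\sup g-X_{i-1}]$, whose length is $\sup g-\inf g\le k$.

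Given this, I would invoke Hoeffding's lemma conditionally: a random variable with conditional mean $0$ supported in an interval of length at most $k$ satisfies $\mathbb{E}[e^{\lambda D_i}\mid\mathcal{F}_{i-1}]\le e^{\lambda^2k^2/8}$ for every $\lambda>0$. Then I would peel the product of exponentials from the outside in: conditioning on $\mathcal{F}_{n-1}$ and pulling out the $\mathcal{F}_{n-1}$-measurable factor $\prod_{i<n}e^{\lambda D_i}$ gives $\mathbb{E}\bigl[\prod_{i=1}^n e^{\lambda D_i}\bigr]\le e^{\lambda^2k^2/8}\,\mathbb{E}\bigl[\prod_{i=1}^{n-1}e^{\lambda D_i}\bigr]$, and iterating down to the trivial $\sigma$-algebra yields $\mathbb{E}[e^{\lambda(X-\mathbb{E}X)}]\le e^{n\lambda^2k^2/8}$. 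A Markov bound then gives $\mathbb{P}(X-\mathbb{E}X\ge t)\le e^{-\lambda t+n\lambda^2k^2/8}$, and taking $\lambda=4t/(nk^2)$ makes the exponent equal to $-2t^2/(nk^2)\le -t^2/(nk^2)$. Applying the same argument to $-X$ (which is again $k$-Lipschitz, with mean $-\mathbb{E}[X]$) and taking a union bound produces $\mathbb{P}(|X-\mathbb{E}X|>t)\le 2e^{-t^2/(nk^2)}$, as claimed.

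The only genuinely delicate point is the conditional-boundedness step of the second paragraph: it is precisely there that the hypothesis that $\Omega$ is a product of independent factors is used, through the Fubini-type identifications of $X_i$ as an average over the later coordinates only and of $X_{i-1}$ as the $\omega_i$-average of $g$. One must also be careful that it is the \emph{range} of $g$, not $2\max|g|$, that controls the conditional spread of $D_i$; this is what produces the $k^2$ (rather than $4k^2$) in the exponent, which is more than enough since the constant asserted in the lemma is in any case suboptimal. Everything else is routine: Hoeffding's lemma may be quoted as a black box (or proved in a couple of lines by convexity of $t\mapsto e^{\lambda t}$ on the supporting interval), and the telescoping of conditional expectations and the optimization over $\lambda$ are entirely standard.
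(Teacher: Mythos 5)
Your argument is correct, and it is the standard Doob-martingale/Hoeffding-lemma proof of McDiarmid's bounded differences inequality (which is what the paper labels ``Azuma's inequality''); the paper itself states this lemma without proof, citing Alon--Spencer as a standard reference. Your conditional-range computation and the optimization $\lambda = 4t/(nk^2)$ are right and in fact yield the sharper exponent $-2t^2/(nk^2)$, so the lemma as stated (with $-t^2/(nk^2)$) follows with room to spare; the only minor caveat is that the precise bound $2e^{-t^2/(k^2n)}$ the paper quotes must be viewed as a deliberately weakened form of the standard inequality, which your derivation confirms.
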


We use the notation $\ll$ for the hierarchy between constants. 
For instance, if we claim that a statement holds under the condition $0<a \ll b, c \ll d$, then it means that there exist non-decreasing functions $f_1, f_2:(0, 1] \rightarrow (0, 1]$ and $g:(0, 1]^2 \rightarrow (0, 1]$ such that the statement holds if $0<b \leq f_1(d)$, $0<c \leq f_2(d)$, and $0<a \leq g(b, c)$. We do not attempt to  describe all these functions explicitly.
We also write $a=(b \pm c)d$ if $(b-c)d \leq a \leq (b+c)d$. 

For a given $3$-graph $G$ and $v \in V(G)$, $\deg(v)$ denotes the number of edges containing $v$ and a vertex $u \in V(G)$ is a \emph{neighbor} of another vertex $v \in V(G)$ if there exists an edge of $G$ containing both $u$ and $v$. Let $\delta(G)=\min_{v\in V(G)} \deg(v)$ be the minimum degree of $G$.
In particular, the number of neighbors of $v$ in a linear 3-graph $G$ is exactly $2 \deg(v)$. 
Let $G[A_1, A_2, A_3]$ denote the subhypergraph of $G$  with the vertex set $A_1 \cup A_2 \cup A_3$ and the edge set $\{\{x,y,z\} \in E(G) : x \in A_1$, $y \in A_2, z \in A_3 \}$. We allow non-empty intersection of $A_1, A_2, A_3$ so an edge in $G[A_1, A_2, A_3]$ may have more than one vertex contained in $A_i$ for some $i \in [3]$. Let $e_G(A_1,A_2,A_3)$ (or simply $e(A_1,A_2,A_3)$) be the number of edges of $G[A_1, A_2, A_3]$.

\medskip

With these terminologies, Pippenger's theorem~\cite{MR993646}, which strengthens a theorem in~\cite{MR829351} (which also appears in~\cite{MR3524748} as a standard application of R\"odl's nibble), can be stated as follows.

\begin{lemma}[Pippenger's theorem]\label{lem_Pippenger}
 Let $\varepsilon, \delta>0$ and let $D, k$ be integers such that $0<\frac{1}{D}, \delta \ll \frac{1}{k}, \varepsilon$. 
 If an $n$-vertex $k$-uniform hypergraph $H$ satisfies that \begin{enumerate}[(i)]
     \item the degree of $v$, $\deg (v) = (1 \pm \delta) D$ for all $v \in V(H)$ and
     \item for any two distinct vertices $u, v \in V(H)$, the number of edges in $H$ containing $\{u, v\}$ is less than $\delta D$,
 \end{enumerate}
 then $H$ contains a matching of size at least $(1-\varepsilon)\frac{n}{k}$.
\end{lemma}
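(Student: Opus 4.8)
The plan is to prove \Cref{lem_Pippenger} by the semi-random method (R\"odl's nibble), run for a bounded number of rounds; this is the standard route, and a self-contained account already appears in~\cite{MR3524748}. Fix auxiliary constants with $0<\tfrac1D,\delta\ll\gamma\ll\tfrac1t\ll\tfrac1k,\varepsilon$, where $\gamma$ is the ``bite size'' and $t$ is the number of rounds. Put $H^{(0)}=H$, $n^{(0)}=n$, $D^{(0)}=D$, $\delta^{(0)}=\delta$. In round $j$, given a $k$-uniform hypergraph $H^{(j)}$ on $n^{(j)}$ vertices that is $(1\pm\delta^{(j)})D^{(j)}$-regular with all pairwise codegrees at most $\delta^{(j)}D^{(j)}$, independently place each edge of $H^{(j)}$ into a random set $E^{(j)}$ with probability $p^{(j)}=\gamma/D^{(j)}$, and let $M^{(j)}\subseteq E^{(j)}$ be the set of edges of $E^{(j)}$ meeting no other edge of $E^{(j)}$. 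Then $M^{(j)}$ is a matching; delete every vertex it covers and let $H^{(j+1)}$ be the hypergraph that remains.

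The crux is a one-round lemma: with positive probability (indeed with high probability), $H^{(j+1)}$ again satisfies the hypotheses, now with parameters $D^{(j+1)}=(1-\gamma e^{-k\gamma})^{k-1}D^{(j)}$, $n^{(j+1)}=(1\pm o(1))(1-\gamma e^{-k\gamma})n^{(j)}$ and $\delta^{(j+1)}=\delta^{(j)}+o(1)$, where the $o(1)$ terms go to $0$ as the hierarchy is pushed to its limits. This is a first-moment computation followed by concentration. By the codegree bound a fixed edge $e$ meets $(k\pm o(1))D^{(j)}$ other edges, so $e\in M^{(j)}$ with probability $p^{(j)}(1-p^{(j)})^{(k\pm o(1))D^{(j)}}=(1+o(1))p^{(j)}e^{-k\gamma}$; since the events ``$e\in M^{(j)}$'' over edges $e$ through a fixed vertex $v$ are mutually exclusive, $v$ is covered with probability $(1+o(1))\gamma e^{-k\gamma}$, and a fixed edge through a surviving vertex survives into $H^{(j+1)}$ with probability $(1+o(1))(1-\gamma e^{-k\gamma})^{k-1}$. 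One upgrades these to hold for \emph{every} vertex — both the number of covered neighbours of $v$ and the surviving degree of $v$ — via concentration plus a union bound over the at most $n$ vertices, and simply deletes the $o(1)$-fraction of vertices whose surviving degree leaves the target window (a deletion that perturbs the rest negligibly). Codegrees only decrease, so that hypothesis is inherited for free.

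Now iterate: choose $t=O_{\gamma,k}(1)$ large enough that $(1-\gamma e^{-k\gamma})^{t}<\varepsilon$ (possible because the per-round vertex-shrinkage $\gamma e^{-k\gamma}$ is a fixed positive constant), while keeping $\gamma$ and the $o(1)$ errors small enough relative to $t$ that $\delta^{(t)}$ stays $\ll 1/k,\varepsilon$ and $D^{(t)}$ stays huge. Then $H^{(t)}$ has fewer than $\varepsilon n$ vertices, i.e.\ rounds $0,\dots,t-1$ covered all but $<\varepsilon n$ vertices of $H$. The matchings $M^{(0)},\dots,M^{(t-1)}$ are pairwise vertex-disjoint by construction, so $M:=\bigcup_{j<t}M^{(j)}$ is a matching of $H$ covering all but fewer than $\varepsilon n$ vertices, whence $|M|\ge(1-\varepsilon)n/k$. (Alternatively, one may just quote the theorem of Pippenger~\cite{MR993646}, strengthening Frankl--R\"odl~\cite{MR829351}, on near-perfect matchings in nearly-regular uniform hypergraphs, of which this is essentially a restatement.)

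The main obstacle is twofold. First, the error bookkeeping: the additive losses $\delta^{(j)}\to\delta^{(j+1)}$ must accumulate to something still $\ll 1/k,\varepsilon$ after all $t$ rounds, the vertices discarded each round must form only a $o(1)$-fraction, and discarding them must not cascade — all of which dictates how small $\gamma$, and hence how large $D$, must be in terms of $t$, $k$, $\varepsilon$. Second, and more seriously, the concentration of the surviving degree of each vertex: it depends on very many independent edge-indicators but is \emph{not} $O(1)$-Lipschitz in the naive sense — flipping one edge in or out of $E^{(j)}$ can alter the covered vertex set, and hence the surviving degree, by as much as $\Omega(kD^{(j)})$ — so Azuma's inequality cannot be invoked as a black box. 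One instead uses Talagrand's inequality with a suitable Lipschitz-and-certifiable bound, or a two-stage exposure of the random choices; this refinement is the technical heart of the nibble, and everything else is a routine first-moment calculation.
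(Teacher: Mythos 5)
The paper does not prove this lemma at all: it is stated as a known result (Pippenger's theorem, strengthening Frankl--R\"odl) and cited directly from~\cite{MR993646}, with~\cite{MR3524748} given as a textbook reference; there is no in-paper argument to compare against. Your proposal, by contrast, sketches a from-scratch proof via the R\"odl nibble, which you also acknowledge parenthetically could be replaced by the citation. As a sketch it is sound: the one-round first-moment estimates (probability a fixed edge is isolated in the bite, hence probability a vertex is covered, hence the multiplicative shrinkage of degrees and of the vertex set) are the right ones, and you correctly flag the two genuine technical obstacles --- the bookkeeping of how $\delta^{(j)}$ degrades over $t$ rounds (which fixes the hierarchy $1/D,\delta\ll\gamma\ll 1/t\ll 1/k,\varepsilon$), and the fact that the surviving degree of a vertex is \emph{not} $O(1)$-Lipschitz in the edge-indicators (so Azuma cannot be applied naively and one needs Talagrand's inequality, a martingale with a better filtration, or a two-stage exposure). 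Those are exactly the points that distinguish a real nibble proof from a heuristic. That said, what you have written is an outline that defers precisely these hard parts, so it is not a complete self-contained proof; for the purposes of this paper, quoting the theorem as a black box, as the authors do, is the cleaner and fully rigorous route, and your sketch buys nothing over the citation unless one intends to actually carry out the concentration argument.
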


\section{Matchings and bare paths}\label{sec_matching}

Our goal in this section is to collect lemmas for extending a given hypertree $T_i$ to $T_{i+1}$ when $T_{i+1}$ is obtained by adding a matching or bare paths to $T_i$.
The key tool to handle the former case is Pippenger's theorem and
the latter case follows from standard concentration results for random edge subsets.

Let $G$ be an $n$-vertex Steiner triple system.
A \emph{singleton-pair partition} $\mathcal{U}$ of $V(G)$ is a partition $\{U_1, U_2, \cdots, U_m\}$ of $V(G)$ such that $|U_i|\leq 2$ for each $i\in[m]$.
Throughout this section, $G$ always denotes an $n$-vertex Steiner triple system with a given singleton-pair partition $\U$. 
For such a fixed singleton-pair partition $\U$, the random subset $\mathcal{U}_p$ of $\U$ is then obtained by choosing each $U_i$ independently at random with probability $p$. Let $X=\bigcup_{U\in \mathcal{U}_p} U$, which we denote by $X\sim \mathcal{U}_p$.

\medskip

In order to find a large matching using Pippenger's theorem (Lemma~\ref{lem_Pippenger}), we need to prove that a randomly chosen vertex set $X\sim \U_p$ together with certain sets $A, B$ of already-embedded vertices induces an almost-regular hypergraph (see Lemma~\ref{lem_edges_largeset}). As we cannot specify these sets of already-embedded vertices at the beginning, we want to show that $X\sim \U_p$ together with any pair of disjoint vertex sets $A, B$ of size at least $\Omega(n)$ induces an almost-regular hypergraph. 
However, as there are too many choices of such vertex sets $A$ and $B$, 
naive applications of concentration results do not yield a strong enough result to use the union bound. Hence, we first prove Lemma~\ref{lem_edges_smallset} which shows that $X\sim \U_p$ together with any pair of much smaller sets induces an almost-regular hypergraph, and we use these much smaller sets as building blocks for $A$ and $B$ to prove Lemma~\ref{lem_edges_largeset}. 

\begin{lemma}\label{lem_edges_smallset}
 Let $X\sim \U_p$ for $p \geq 1/(\log n)^{100}$ and let $\varepsilon \geq 1/{(\log n)^{100}}$. Then the following holds with probability at least $1-o(1/n)$:
 For every pair of disjoint sets $A, B \subseteq V(G)$ satisfying 
 \begin{enumerate}[(a)]
     \item \label{6.1.1} $n^{2/5} \leq |A|, |B| \leq n^{1/2}$,
     \item \label{6.1.2} $e(A,B,\{v\})\leq (\log n)^{10}$ for every $v \in V(G)$, and
     \item $X$ is disjoint from $A\cup B$,
 \end{enumerate}
 we have $e(A,B,X)=(1\pm \varepsilon)p|A||B|$.
\end{lemma}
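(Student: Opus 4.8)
The plan is to fix a candidate pair $A, B$ satisfying \eqref{6.1.1} and \eqref{6.1.2}, disjoint from each other, and to analyze the random variable $Z_{A,B} := e(A,B,X)$ conditioned on $X$ being disjoint from $A \cup B$. Write $\mathcal{E}_{A,B} := \{U \in \mathcal{U} : U \cap (A \cup B) = \emptyset\}$ for the ``available'' parts; we expose only the choices $\mathbf{1}[U \in \mathcal{U}_p]$ for $U \in \mathcal{E}_{A,B}$, which are mutually independent Bernoulli$(p)$ variables. For each such $U$, let $w(U)$ be the number of edges of $G$ of the form $\{a,b,c\}$ with $a \in A$, $b \in B$, $c \in U$; then $Z_{A,B} = \sum_{U \in \mathcal{E}_{A,B}} w(U) \cdot \mathbf{1}[U \in \mathcal{U}_p]$, so $\mathbb{E}[Z_{A,B}] = p \sum_{U} w(U)$. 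Since $G$ is a Steiner triple system, every pair $\{a,b\}$ with $a \in A, b \in B$ lies in exactly one edge, and that edge's third vertex is some specific vertex of $V(G)$; moreover that third vertex cannot lie in $A \cup B$ unless the edge is entirely inside... actually some such edges may have third vertex inside $A\cup B$, but those contribute nothing to $Z_{A,B}$ since then the relevant $U \not\in \mathcal{E}_{A,B}$. The total count $\sum_{v} e(A,B,\{v\}) = |A||B|$, and the contribution from $v \in A \cup B$ is $O((|A|+|B|)(\log n)^{10}) = O(n^{1/2}(\log n)^{10}) = o(|A||B|)$ since $|A|,|B| \ge n^{2/5}$. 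Hence $\sum_{U \in \mathcal{E}_{A,B}} w(U) = (1 \pm o(1))|A||B|$, giving $\mathbb{E}[Z_{A,B}] = (1 \pm o(1))p|A||B| \ge (1-o(1)) n^{4/5}/(\log n)^{100}$, which is polynomially large.

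Next I would apply a concentration inequality. Condition \eqref{6.1.2} says $w(U) \le 2(\log n)^{10}$ for every $U$ (each of the at most two vertices in $U$ serves as the third vertex of at most $(\log n)^{10}$ edges between $A$ and $B$). So $Z_{A,B}$ is a sum of independent random variables each bounded by $2(\log n)^{10}$. A weighted Chernoff/Hoeffding-type bound (or Azuma's inequality, Lemma~\ref{lem_azuma}, applied with Lipschitz constant $k = 2(\log n)^{10}$ over the $n$ coordinates) yields
\[
\mathbb{P}\bigl(|Z_{A,B} - \mathbb{E}[Z_{A,B}]| > \tfrac{\varepsilon}{2}\mathbb{E}[Z_{A,B}]\bigr) \le 2\exp\!\left(-\frac{(\varepsilon \mathbb{E}[Z_{A,B}]/2)^2}{(2(\log n)^{10})^2 \, n}\right).
\]
With $\mathbb{E}[Z_{A,B}] = \Theta(n^{4/5}/(\log n)^{100})$ and $\varepsilon \ge 1/(\log n)^{100}$, the exponent is of order $-n^{8/5 - 1}/\mathrm{polylog}(n) = -n^{3/5}/\mathrm{polylog}(n)$, so this failure probability is $\exp(-n^{\Omega(1)})$. (If the exponent comes out marginal one can instead use the sharper weighted Chernoff bound where the variance proxy is $\sum_U w(U)^2 \le 2(\log n)^{10}\sum_U w(U) = O((\log n)^{10}|A||B|)$, giving exponent $-\Theta(\varepsilon^2 p |A||B| / (\log n)^{10}) = -n^{\Omega(1)}$, which is cleaner and more robust.) Then I would absorb the $o(1)$ discrepancy between $\mathbb{E}[Z_{A,B}]$ and $p|A||B|$ into the $\varepsilon$ factor: for $n$ large the event $|Z_{A,B} - \mathbb{E}[Z_{A,B}]| \le \frac{\varepsilon}{2}\mathbb{E}[Z_{A,B}]$ together with $\mathbb{E}[Z_{A,B}] = (1 \pm o(1))p|A||B|$ implies $Z_{A,B} = (1 \pm \varepsilon)p|A||B|$.

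Finally I would take a union bound over all eligible pairs $(A,B)$. The number of such pairs is at most $\binom{n}{\le n^{1/2}}^2 \le (n^{n^{1/2}})^2 = \exp(2 n^{1/2}\log n)$. Since each bad pair has probability at most $\exp(-n^{\Omega(1)})$ with the exponent $n^{3/5}$ (or better) dominating $n^{1/2}\log n$, the union bound gives total failure probability $\exp(2n^{1/2}\log n - n^{3/5}/\mathrm{polylog}(n)) = o(1/n)$, as required. The main obstacle — and the only place any care is needed — is making sure the concentration exponent genuinely beats the $n^{1/2}\log n$ entropy cost of the union bound; this is exactly why the lemma restricts to the window $n^{2/5} \le |A|,|B| \le n^{1/2}$ (small enough that $\log$ of the number of pairs is $n^{1/2+o(1)}$, large enough that $\mathbb{E}[Z_{A,B}]$ is a sufficiently large polynomial in $n$ and that the $v \in A\cup B$ error term is lower-order), and why hypothesis \eqref{6.1.2} is imposed to control the per-coordinate Lipschitz constant / variance proxy. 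Everything else is a routine verification. A minor point to state carefully: the independence used is independence of the coordinates indexed by $\mathcal{E}_{A,B}$ only, but since conditioning on ``$X$ disjoint from $A\cup B$'' is precisely the event that all coordinates outside $\mathcal{E}_{A,B}$ are $0$, and $Z_{A,B}$ depends only on coordinates in $\mathcal{E}_{A,B}$, the conditional law of $Z_{A,B}$ is exactly the unconditional law of $\sum_{U \in \mathcal{E}_{A,B}} w(U)\mathbf{1}[U\in\mathcal{U}_p]$, so no subtlety arises.
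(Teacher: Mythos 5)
Your proposal is correct and follows essentially the same route as the paper: bound the per-coordinate effect of each $U\in\mathcal{U}$ by $O((\log n)^{10})$ using hypothesis~\eqref{6.1.2}, apply Azuma to the sum of independent part-indicators, and beat the $\exp(O(n^{1/2}\log n))$ union bound over pairs $(A,B)$ with the $\exp(-n^{3/5}/\mathrm{polylog}(n))$ tail coming from $|A||B|\ge n^{4/5}$. The only (cosmetic) difference is that the paper sidesteps the conditioning on $X\cap(A\cup B)=\emptyset$ by proving concentration of the auxiliary bipartite-graph count $|E(G_X[A,B])|$ unconditionally, with exact expectation $p|A||B|$, and only at the end notes it equals $e(A,B,X)$ when disjointness holds, whereas you work directly with the restricted sum over $\mathcal{E}_{A,B}$ and absorb the resulting $o(1)$ discrepancy into $\varepsilon$.
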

\begin{proof}
    Let $G_X[A, B]$ be the auxiliary bipartite graph on the bipartition $A\cup B$, where $(a,b)\in A\times B$ is an edge if there exists $x\in X$ such that $abx\in E(G)$.  
    As $G$ is a Steiner triple system, every pair $(a,b)\in A\times B$ extends to a unique edge together with a vertex $x$, which belongs to $X$ with probability $p$. Thus, the expected size of $E(G_X[A, B])$ is $p|A||B|$.
    
    One may then apply Azuma's inequality to prove concentration results for $|E(G_X[A, B])|$.
    Indeed, for a fixed pair of disjoint $A, B \subseteq V(G)$ such that $e(A,B,\{v\})\leq k$ for every $v\in V(G)$,
    each event $U\in \U_p$ changes $|E(G_X[A, B])|$ by at most $2k$.
    Therefore, by Azuma's inequality (Lemma~\ref{lem_azuma}), the probability that $|E(G_X[A, B])|$ deviates from its expectation by more than $\varepsilon p|A||B|$ is at most $2 \exp(-\varepsilon^2 p^2 |A|^2 |B|^2 / 4n k^2)$.

    The number of pairs $(A,B)$ such that $n^{2/5} \leq |A|, |B| \leq n^{1/2}$ is at most 
    \begin{align*}
     \left(n^{1/2}\binom{n} {n^{1/2}}\right)^2 \leq n \left(\frac{en}{n^{1/2}}\right)^{2n^{1/2}}\leq (en^{1/2})^{2n^{1/2}+2}. 
    \end{align*}
    Among these choices, we call a pair $(A,B)$ \emph{bad} if $\big||E(G_X[A, B])|-p|A||B|\big| \geq \varepsilon p|A||B|$.
     Then the probability that there exists a bad pair $(A, B)$ is at most
    \begin{align*} 
     2 \exp(-\varepsilon^2 p^2 |A|^2 |B|^2 / 4nk^2)(en^{1/2})^{2n^{1/2}+2} & \leq 2\exp(-\varepsilon^2 p^2 |A|^2 |B|^2 / 4 n k^2 + 10n^{1/2}\log n)\\
     & \leq 2\exp(-\varepsilon^2 p^2n^{3/5}/4 k^2 + 10n^{1/2}\log n) \\
     &  = o\Big(\frac{1}{n}\Big),
    \end{align*}
    provided $k=(\log n)^{10}$ and $\varepsilon,p\geq (\log n)^{-100}$.
    If $X, A, B$ are pairwise disjoint, then $e(X, A, B)=|E(G_X[A, B])|$, which completes the proof.
\end{proof}

By using the above lemma, we prove a concentration result for larger sets $A$ and $B$ of size $o(n)$.

\begin{lemma}\label{lem_edges_largeset}
 Let $X\sim \mathcal{U}_p$ for $p \geq 1/(\log n)^{100}$ and let $\eta \geq 1/(\log n)^{100}$. Then with probability $1-o(1/n)$, every pair of disjoint subsets $A, B \subseteq V(G) \setminus X$ of size at least $\eta n$ satisfies
 $e(A, B, X) = (1 \pm \eta)p|A||B|$.
\end{lemma}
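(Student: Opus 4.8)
The plan is to reduce \Cref{lem_edges_largeset} to \Cref{lem_edges_smallset} by chopping $A$ and $B$ into many small blocks and summing the contributions. Fix $X\sim\mathcal{U}_p$. Apply \Cref{lem_edges_smallset} with $\varepsilon:=1/(\log n)^{100}$: with probability $1-o(1/n)$ the event $\mathcal{E}$ holds that $e(A',B',X)=(1\pm\varepsilon)p|A'||B'|$ for \emph{every} pair of disjoint $A',B'\subseteq V(G)$ with $n^{2/5}\le |A'|,|B'|\le n^{1/2}$, $e(A',B',\{v\})\le(\log n)^{10}$ for all $v$, and $X$ disjoint from $A'\cup B'$. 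From now on we condition on $\mathcal{E}$; no further randomness in $X$ is used, and it remains a deterministic task to handle an arbitrary pair of disjoint $A,B\subseteq V(G)\setminus X$ with $|A|,|B|\ge \eta n$.

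\textbf{Step 1: a good block partition exists.} Set $m:=\lceil n^{0.45}\rceil$. I claim there is a partition $A=A_1\cup\dots\cup A_s$ and $B=B_1\cup\dots\cup B_t$ with $n^{2/5}\le|A_i|,|B_j|\le n^{1/2}$ for all $i,j$ and $e(A_i,B_j,\{v\})\le(\log n)^{10}$ for all $i,j$ and all $v\in V(G)$. I prove this by the probabilistic method: give each vertex of $A$ an independent uniform label in $[s]$, where $s:=\lceil|A|/m\rceil$, and each vertex of $B$ an independent uniform label in $[t]$, where $t:=\lceil|B|/m\rceil$, and let $A_i,B_j$ be the label classes. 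Since $|A|,|B|\ge\eta n\gg m$, each $|A_i|$ is a binomial variable with mean $(1\pm o(1))m$, and as $m\gg\sqrt{m\log n}$, \Cref{lem_chernoff} and a union bound over the at most $n$ blocks give that, with probability $1-o(1)$, all blocks have size in $[n^{2/5},n^{1/2}]$. For the codegree condition, fix $v$: because $G$ is a Steiner triple system, the pairs $\{a,b\}$ forming an edge with $v$ are pairwise disjoint, so at most $n$ of them have one endpoint in $A$ and the other in $B$, and for distinct such pairs the relevant labels are carried by distinct vertices, hence are mutually independent. Thus $e(A_i,B_j,\{v\})$ is a sum of at most $n$ independent Bernoulli variables each of probability $1/(st)\le m^2/(\eta n)^2$, so its mean is at most $\mu_0:=m^2/(\eta^2 n)\le n^{-0.05}$ for large $n$ (using $\eta\ge(\log n)^{-100}$). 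A union bound over $k$-subsets of successes gives $\mathbb{P}\big(e(A_i,B_j,\{v\})\ge k\big)\le \binom{n}{k}(1/(st))^k\le \mu_0^{k}/k!\le \mu_0^{k}$; with $k=(\log n)^{10}$ this is at most $n^{-(\log n)^{10}/20}$, which is super-polynomially small, so a union bound over the at most $n$ vertices $v$ and the at most $n^2$ pairs $(i,j)$ leaves the failure probability $o(1)$. Hence a random partition has all required properties with positive probability, so such a partition exists.

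\textbf{Step 2: summing up.} Given such block partitions, each pair $(A_i,B_j)$ satisfies the hypotheses of \Cref{lem_edges_smallset}: $A_i$ and $B_j$ are disjoint (as $A\cap B=\emptyset$), contained in $V(G)\setminus X$, have size in $[n^{2/5},n^{1/2}]$, and satisfy the codegree bound. On $\mathcal{E}$ we therefore get $e(A_i,B_j,X)=(1\pm\varepsilon)p|A_i||B_j|$ for all $i,j$, and since the $A_i$ partition $A$ and the $B_j$ partition $B$,
\[
e(A,B,X)=\sum_{i,j} e(A_i,B_j,X)=\sum_{i,j}(1\pm\varepsilon)p|A_i||B_j|=(1\pm\varepsilon)p|A||B|=(1\pm\eta)p|A||B|,
\]
using $\varepsilon=(\log n)^{-100}\le\eta$. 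As $A,B$ were arbitrary, this proves the lemma.

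\textbf{Main obstacle.} The only delicate point is Step~1, where one must beat the union bound over all $v$ and all blocks (in fact one could even afford a union bound over all pairs $A,B$). This succeeds precisely because, for a fixed $v$, the edge-completing pairs through $v$ are vertex-disjoint; this independence yields the super-polynomially small tail coming from the $\mu_0^{k}/k!$ estimate, whereas a first-moment/Markov bound would be far too weak to survive the union bound.
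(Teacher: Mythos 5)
Your proposal is correct and follows essentially the same route as the paper: condition on the high-probability event of \Cref{lem_edges_smallset}, then for any fixed $A,B$ exhibit (via a random labelling and concentration, exploiting that the edges through a fixed $v$ in a Steiner system are vertex-disjoint and hence yield independent indicators) a partition of each into blocks of size roughly $n^{2/5}$ satisfying the codegree condition, and sum the block contributions. The only cosmetic differences are the target block size ($n^{0.45}$ versus the paper's $3n^{2/5}$) and the use of a direct binomial tail estimate in place of the paper's one-sided Chernoff bound for the codegree; both are equivalent in strength here.
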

\begin{proof}
    Let us fix disjoint subsets $A, B \subseteq V(G) \setminus X$ of size at least $\eta n$. We will partition $A$ and $B$ into ``manageable'' subsets to which \Cref{lem_edges_smallset} can apply.
    To that end, let $s:=\lceil\frac{|A|}{3n^{2/5}} \rceil$ and $t:=\lceil\frac{|B|}{3n^{2/5}} \rceil$.
    Partition $A$ into $s$ subsets $A_1, A_2, \ldots, A_s$ by choosing $i_a \in [s]$ uniformly and independently at random for each $a \in A$ and putting $a$ in $A_{i_a}$. We also partition $B$ in the same way but using $t$ subsets $B_1, B_2,\ldots, B_t$.
    For each $i\in [s]$ and $j\in [t]$, the expected sizes of each $A_i$ and $B_j$ are $|A|/s$ and $|B|/t$, respectively, each of which are of size $(3+o(1))n^{2/5}$.
    The Chernoff bound (\Cref{lem_chernoff}) then implies that all $A_i$ and $B_j$ are of size between $n^{2/5}$ and $n^{1/2}$ with probability at least
     $1-2(s+t)\exp(-n^{2/5}/12) = 1-o(1/n)$.

    For fixed $v \in V(G)$, $i \in [s]$, and $j \in [t]$, let $\mathcal{E}_e$ be the event that an edge $e \in E(G)$ containing~$v$ is in $G[\{v\}, A_i, B_j]$. Then
    $\mathcal{E}_e$ occurs with probability at most $1/(st) = o(1/n)$ for any~$e \in E(G)$ containing $v$.
    Furthermore, linearity of $G$ ensures that the events $\{\mathcal{E}_e\}_{e\in E(G), e \ni v}$ are mutually independent.
    Thus, by the one-sided Chernoff bound (Lemma~\ref{lem_oneside_chernoff}), $e(A_i, B_j, \{v\}) \leq (\log n)^{10}$ with probability at least $1-\exp(-(\log n)^8)$. Together with the union bound, this is enough to conclude that $e(A_i, B_j, \{v\}) \leq (\log n)^{10} $ holds for all vertices $v \in V(G)$ and indices $i \in [s]$, $j \in [t]$ with positive probability.
    Therefore, there exist partitions $A_1, A_2, \ldots, A_s$ of $A$ and $B_1, B_2, \ldots, B_t$ of $B$ such that $A_i$ and $B_j$ satisfy the conditions~\eqref{6.1.1} and~\eqref{6.1.2} of  \Cref{lem_edges_smallset} for every $i$ and $j$.
    
    Thus, Lemma~\ref{lem_edges_smallset} implies that, with probability at least $1-o(1/n)$,
    we have $e(A_i, B_j, X) =(1 \pm \eta) p |A_i||B_j|$ for every $i \in [s], j \in [t]$ (for all choices of $A, B\subseteq V(G) \setminus X$).
    Now, taking the sum over all $i \in [s]$ and $j \in [t]$ completes the proof of the lemma.
\end{proof}

Finally, using Lemma~\ref{lem_edges_largeset}, we find a matching $M$ that covers almost all the vertices of any prescribed set $A$ which is small enough and disjoint from the random set $X$.

\begin{lemma}\label{lem_embedding_matching}
 Let $\varepsilon>0$ be a constant and suppose $0< 1/n \ll \varepsilon$.
 Let $X\sim \mathcal{U}_p$ for $p \geq {1}/{(\log n)^{20}}$.
 Then the following holds with probability $1-o(1/n)$.
 For all vertex sets $A \subseteq V(G) \setminus X$ of size at most $pn/2$, there exists a matching $M$ of size at least $|A|-\varepsilon p n$ such that  each edge $e \in M$ satisfies $|e \cap X|=2$ and $|e \cap A|=1$.
\end{lemma}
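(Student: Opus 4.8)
The plan is to realize the desired matching as a near-perfect matching in an auxiliary $3$-graph to which Pippenger's theorem (\Cref{lem_Pippenger}) applies, after two preprocessing moves: discarding the few vertices of $A$ whose link structure is "aligned" with the singleton-pair partition, and padding $A$ up to a canonical size $\approx pn/2$, so that the auxiliary $3$-graph is balanced (the matching is forced to use essentially all of $X$, since each of its $\approx|A|$ edges consumes two vertices of $X$ and $|X|\approx pn\approx 2|A|$). To break the symmetry inside $X$, I would couple $X\sim\U_p$ with a finer random labelling of the parts of $\U$ by $\{\emptyset,Y,Z\}$ with probabilities $1-p,\,p/2,\,p/2$, so that $X=Y\cup Z$ and $Z\sim\U_{p/2}$, and then only look for matching edges of the form $\{a,y,z\}$ with $a\in A$, $y\in Y$, $z\in Z$. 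Since the conclusion depends only on $X$, it suffices to show it holds with probability $1-o(1/n)$ over the labelling.

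Concretely, fix constants $\delta_0,\varepsilon_P\ll\varepsilon$ and set $\eta:=(\log n)^{-50}$, which lies between $(\log n)^{-100}$ and $o(p^2)$. Call the labelling \emph{good} if: (Q1) $|X|=(1\pm\delta_0)pn$ and $|Y|,|Z|=(1\pm\delta_0)pn/2$; (Q2) for every vertex $v$ with $s_v:=|\{\text{link edges of }v\text{ that are pair-parts of }\U\}|\le(\log n)^{21}$, the number of link edges of $v$ with one endpoint in $Y$ and one in $Z$ is $(1\pm\delta_0)D$, where $D:=p^2n/4$; and (Q3) the conclusion of \Cref{lem_edges_largeset} holds for $Z$ (density $p/2$, parameter $\eta$), i.e.\ $e_G(P,Q,Z)=(1\pm\eta)(p/2)|P||Q|$ for all disjoint $P,Q\subseteq V(G)\setminus Z$ with $|P|,|Q|\ge\eta n$. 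Events (Q1) and (Q2) follow from Chernoff/Azuma together with a union bound over the $\le n$ vertices $v$ (the deviation exponent is of order at least $\delta_0^2p^4n\gg\log n$), and (Q3) is \Cref{lem_edges_largeset}. Because each pair-part of $\U$ is a link edge of exactly one vertex (as $G$ is a Steiner triple system), $\sum_v s_v\le\lfloor n/2\rfloor$, so only $O(n/(\log n)^{21})=o(pn)$ vertices have $s_v>(\log n)^{21}$.

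Now assume the labelling is good and let $A$ be arbitrary. Delete from $A$ the $o(pn)$ vertices with large $s_v$ to get $A'$, then pad $A'$ with vertices $v\in V(G)\setminus X$ having $s_v=0$ (there are $\ge n/4$ such outside $X\cup A$) to obtain $\tilde A$ with $|\tilde A|=\lfloor pn/2\rfloor$, so every vertex of $\tilde A$ has $s_v\le(\log n)^{21}$. Let $H:=G[\tilde A,Y,Z]$ be the $3$-graph on $\tilde A\cup Y\cup Z$ with edge set $\{\{a,y,z\}\in E(G):a\in\tilde A,y\in Y,z\in Z\}$. By (Q2), $\deg_H(a)=(1\pm\delta_0)D$ for every $a\in\tilde A$. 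On the other side, $\deg_H(y)=e_G(\{y\},\tilde A,Z)$, so if the set $B^+$ of $y$'s with $\deg_H(y)>(1+\delta_0)D$ had size $\ge\eta n$, then (Q3) with $P=B^+$, $Q=\tilde A$ would force $\sum_{y\in B^+}\deg_H(y)=e_G(B^+,\tilde A,Z)=(1\pm\eta)(1\pm o(1))|B^+|D$, contradicting $\eta\ll\delta_0$; the same argument bounds the $y$'s with $\deg_H(y)<(1-\delta_0)D$, and likewise for $Z$. Deleting all such bad $y,z$ (at most $2\eta n$ of each) and using $\eta n=o(D)$ and $\eta n=o(pn)$, the resulting $3$-graph $H'$ has all degrees $(1\pm 2\delta_0)D$, vertex count $(3\pm O(\delta_0))|\tilde A|$, and codegree at most $1<\delta_0 D$ (Steiner property). \Cref{lem_Pippenger} then gives a matching $\tilde M$ in $H'$ with $|\tilde M|\ge(1-\varepsilon_P)|V(H')|/3\ge|\tilde A|-\varepsilon pn/10$. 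Setting $M:=\{e\in\tilde M:e\cap A'\neq\emptyset\}$ and using that each edge of $\tilde M$ meets $\tilde A$ in exactly one vertex yields $|M|\ge|A'|-\varepsilon pn/10\ge|A|-\varepsilon pn$, with every $e\in M$ satisfying $|e\cap X|=2$ and $|e\cap A|=1$.

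The main obstacle is the regularity of $H$ on the $X$-side: a priori $\deg_H(y)$ is governed by how the adversarial set $A$ sits inside the link of $y$, and a crude averaging bound only shows there are $O(pn)$ atypical $y$'s—far too many, since almost all of $X$ must be used. The two points that need to be executed carefully are that splitting $X$ into $Y\cup Z$ converts the $X$-degree into an $e_G(\cdot,\cdot,Z)$-count controlled by \Cref{lem_edges_largeset}, and that this estimate is \emph{uniform over all $A$}, so that no union bound over $A$ is incurred and the bad set provably has size only $\eta n=o(D)$; after that, checking that the $\pm O(\delta_0)$ imbalance in the part sizes is still compatible with Pippenger's hypotheses is routine but must be done with some care, since $D=p^2n/4$ can be as small as $n/(\log n)^{40}$.
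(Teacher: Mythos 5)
Your proposal is correct and follows essentially the same strategy as the paper's proof: randomly split $X$ into two halves, pad $A$ up to a canonical size $\approx pn/2$ so the auxiliary tripartite $3$-graph is balanced, use \Cref{lem_edges_largeset} together with a pigeonhole/contradiction argument to kill all but an $O(\eta n)$ exceptional set of irregular vertices, and finish with Pippenger's theorem. Your one real divergence from the paper is in how the $A$-side degrees are controlled: the paper simply reuses \Cref{lem_edges_largeset} once more (taking $A'\subseteq A^*$ and $B'=X_1$ in the contradiction step), whereas you substitute a direct per-vertex Azuma estimate (Q2), which forces you to introduce $s_v$, invoke $\sum_v s_v\le n/2$ to get only $o(pn)$ bad vertices, and discard or avoid them when building $\tilde A$. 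Both routes work; the paper's is a little leaner because it needs no new invariant, while yours makes the structural obstruction (links aligned with pair-parts of $\mathcal U$) explicit.

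Two small points should be tightened. First, (Q3) as written controls only $e_G(\cdot,\cdot,Z)$; to bound the bad vertices of $Z$ you need $\deg_H(z)=e_G(\tilde A,Y,\{z\})$, and the contradiction argument there requires $e_G(\tilde A,\,\cdot\,,Y)$ to be controlled for subsets of $Z$, which (Q3) does not supply. You need the symmetric instance of \Cref{lem_edges_largeset} with the roles of $Y$ and $Z$ swapped, exactly as the paper applies it to both $X_1$ and $X_2$. Second, your claim that there are $\ge n/4$ vertices with $s_v=0$ outside $X\cup A$ for padding only holds when $p$ is bounded away from $1$ (otherwise $|X\cup A|$ eats almost everything); this implicit constraint is shared by the paper's padding step and is harmless in the application, but it is worth noting, and is anyway easy to relax by padding with any vertex of small $s_v$ rather than insisting on $s_v=0$.
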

\begin{proof}
Choose $\tau>0$ such that $0<1/n \ll\tau\ll\varepsilon$ and let $\eta = \tau p^2$. In particular, we choose $\tau$ so that $\eta =\tau p^2 \geq 1/(\log n)^{100}$.

Let $\{X_1, X_2\}$ be a random partition of $X$ obtained by assigning each $U_i \in \U_p$ to exactly one of $X_1, X_2$ independently at random.
 Then each~$X_i$ is a (possibly dependent) copy of $\U_{p/2}$ i.e., $X_i\sim \mathcal{U}_{p/2}$.

 By Lemma~\ref{lem_edges_largeset}, with probability $1-o(1/n)$, every pair of disjoint subsets $A', B' \subseteq V(G) \setminus X_2$ of size at least $\eta n$ satisfies $e(A', B', X_2) = (1\pm \eta) p|A'||B'|/2$.
 Similarly, with probability $1-o(1/n)$, every pair of disjoint subsets $A', B' \subseteq V(G) \setminus X_1$ of size at least $\eta n$ satisfies $e(A', X_1, B') = (1\pm \eta) p|A'||B'|/2$.
 Conditioning on these two events, say $\mathcal E_1$ and $\mathcal E_2$, the following holds for any $A^* \subseteq V(G) \setminus X$ with $|A^*| = pn/2$:
 \begin{enumerate}[(i)]
     \item For every $A' \subseteq A^*$ and $X' \subseteq X_1$ of size at least $\eta n$, we have $e(A', X', X_2)=(1 \pm \eta) p|A'||X'|/2$;
     \item For every $A' \subseteq A^*$ and $X' \subseteq X_2$ of size at least $\eta n$, we have $e(A', X_1, X')=(1 \pm \eta) p|A'||X'|/2$.
 \end{enumerate}
 This implies that $e(A^*, \{v\}, X_2) =  (1 \pm \eta) p|A^*|/2$ for all but at most $2\eta n$ vertices $v$ in $X_1$. 
 Indeed, if not, we can collect $\eta n$ vertices to obtain a subset $X' \subseteq X_1$ with $|X'| \geq \eta n$ and either $e(A^*, X', X_2) > (1 +\eta) p|A^*||X'|/2$ or $e(A^*, X', X_2) < (1 -\eta) p|A^*||X'|/2$, which contradicts the conditioned events $\mathcal E_1$ and $\mathcal E_2$.
 Moreover, by swapping the roles of $X_1$ and $X_2$, one can prove that, 
 for all but at most $2\eta n$ vertices $v\in X_2$, $e(A^*, X_1, \{v\}) = (1 \pm \eta) p|A^*|/2$ holds.
  Analogously, for all but at most $2\eta n$  vertices $v\in A^*$,
 $e(\{v\}, X_1, X_2) = (1 \pm \eta) p|X_1|/2$ holds.

One can also control the sizes of $X
_1$ and $X_2$. Namely, since $X_1$ and $X_2$ are 2-Lipschitz, Azuma's inequality (Lemma~\ref{lem_azuma}) gives that $|X_1|, |X_2|=(1\pm \eta) p n/2$ with probability at least $$1-2\exp\Big(-\frac{\eta^2 p^2 n^2 }{16n}\Big) \geq  1-2\exp\Big(-\frac{\tau^2 p^6 n}{16}\Big) = 1-o\Big(\frac{1}{n}\Big).$$
In particular, for each $i \in [2]$, we have $(1 \pm \eta) p|X_i|/2 = (1 \pm 3\eta)p^2 n/4$.

Assuming all these four high probability events occur, namely $\mathcal E_1, \mathcal E_2$, and that $|X_1|, |X_2|=(1\pm \eta) p n/2$, let $A \subseteq V(G) \setminus X$ be a vertex set of size at most $pn/2$. By adding arbitrary vertices to $A$, we obtain a set $A^*$ with $|A^*|=pn/2$.
 Let $H$ be the $3$-graph obtained by removing all the exceptional vertices from $G[A^*, X_1, X_2]$, i.e., those vertices whose degrees in $G[A^*, X_1, X_2]$ are not in the range $(1 \pm 3 \eta)p^2 n/4$.
 By the discussion above concerning the number of such exceptional vertices, we remove at most $2 \eta n$ vertices from each part. Thus, as $G$ is linear, each remaining vertex has degree $$\frac{1}{4}(1 \pm 3\eta)p^2 n \pm 4 \eta n = \frac{1}{4}(1 \pm (3\eta +16\eta/p^2)) p^2 n.$$
 As $H$ is linear, each pair of vertices has codegree at most $1$.
 Therefore, by Pippenger's theorem (\Cref{lem_Pippenger}), as $(3\eta +16\eta/p^2) \ll \varepsilon/2$, $H$ contains a matching $M^*$ of size at least \begin{align*}
     (1-\varepsilon/2)(|A^*|+|X_1|+|X_2|-6\eta n)/3 \geq (1-\varepsilon/2)(|A^*|-3 \eta n) \geq |A^*|-\varepsilon p n. 
 \end{align*}
By removing the edges containing a vertex of $A^*\setminus A$  from $M^*$, we obtain the desired matching $M$ of size at least $|A|-\varepsilon pn.$
\end{proof}

Lemma~\ref{lem_embedding_matching} allows us to extend $T_{i-1}$ to $T_i$ whenever $T_i$ is obtained by adding a matching, each of whose edges contains exactly one vertex in $T_{i-1}$.
We now proceed to the other case where we add $o(n)$ bare paths of length 3.
A pair of $u$--$v$ paths $P_1$ and $P_2$ are said to be \emph{internally vertex-disjoint} if their vertex sets are disjoint except the two vertices $u$ and $v$ i.e., $(V(P_1) \setminus \{u, v\}) \cap (V(P_2) \setminus \{u, v\}) = \emptyset$.
The next lemma proves that there are ``many" internally vertex-disjoint paths of length $3$ between pairs of vertices $u$ and $v$.

\begin{lemma}\label{lem_embedding_bare_path}
 Let $0<1/n \ll \mu \ll p$ and let $X\sim \mathcal{U}_p$. Then with probability $1-o(1/n)$, the following holds:
 For every pair of distinct vertices $u, v \in V(G)$, there exist at least $\mu n$ internally vertex-disjoint $u$--$v$ paths of length $3$ such that all of their internal vertices are contained in $X$.
\end{lemma}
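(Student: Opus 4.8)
The plan is to fix a pair of distinct vertices $u,v$, first build \emph{deterministically} a family $\mathcal{P}_{u,v}$ of $\Omega(n)$ pairwise internally vertex-disjoint $u$--$v$ paths of length $3$ inside $G$ (not yet worrying about $X$), and then show that with probability $1-o(1/n^{3})$ at least $\mu n$ of them have all of their internal vertices in $X$; a union bound over the at most $n^{2}$ pairs $\{u,v\}$ will give the $1-o(1/n)$ conclusion.

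For the deterministic part I would use that $G$ is a Steiner triple system, so $\deg(w)=(n-1)/2$ for every $w$ and every pair of vertices lies in exactly one edge. A length-$3$ $u$--$v$ path is determined by the ordered pair $(v_1,v_2)$ with $v_1\in N(u)$ and $v_2\in N(v)$: the first edge is the unique edge $\{u,v_1,u_1\}$, the last is the unique edge $\{v,v_2,u_3\}$, and the middle edge is the unique edge $\{v_1,v_2,u_2\}$. A short case check using linearity shows that $u_2$ is automatically distinct from all of $u,v,v_1,v_2,u_1,u_3$ and that the only coincidences among these seven vertices one must forbid by hand are $v_1\in\{v,v_2\}$, $v_2=u$, $u_1\in\{v,u_3\}$, $u_3=u$, and $v_1=u_3$. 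I would then build $\mathcal{P}_{u,v}$ greedily: after $j$ paths are chosen, at most $5j$ vertices are forbidden as internal vertices, and since each relevant ``partner'' map (for instance $v_1\mapsto u_1$ among the edges through $u$, or the map sending $v_2$ to the third vertex of $\{v_1,v_2\}$) is at most one-to-one, each already-used vertex and each forbidden coincidence kills only $O(1)$ admissible choices of $v_1$, and then of $v_2$. Thus an admissible $(v_1,v_2)$ survives as long as $5j=O(n)$, producing a family $\mathcal{P}_{u,v}$ of size at least $c_0 n$ for an absolute constant $c_0>0$.

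For the probabilistic part, with $u,v$ fixed let $Y=Y_{u,v}$ count the paths $P\in\mathcal{P}_{u,v}$ whose five internal vertices all lie in $X$. Because each part of $\mathcal{U}$ has size at most $2$, the internal vertices of any $P$ are covered by between $3$ and $5$ parts, so $P$ survives with probability $p^{k_P}\ge p^{5}$ and hence $\mathbb{E}[Y]\ge c_0 p^{5} n$. Since the internal-vertex sets of the paths in $\mathcal{P}_{u,v}$ are pairwise disjoint and each part of $\mathcal{U}$ meets at most two of them, $Y$ is a $2$-Lipschitz function of the independent indicators $\mathbf{1}[U_i\in\mathcal{U}_p]$; as $\mu\ll p$ I may assume $\mu\le c_0 p^{5}/2$, so $\mu n\le\tfrac12\mathbb{E}[Y]$, and Azuma's inequality (\Cref{lem_azuma}) gives $\mathbb{P}(Y<\mu n)\le 2\exp(-c_0^{2}p^{10}n/16)$. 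A union bound over the fewer than $n^{2}$ pairs $\{u,v\}$, together with $1/n\ll\mu\ll p$, shows the probability that some pair fails is $o(1/n)$; on the good event every $u\neq v$ has at least $\mu n$ internally vertex-disjoint $u$--$v$ paths of length $3$ with all internal vertices in $X$, as required.

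The hard part will be the deterministic step: carefully determining, for the seven vertices of a length-$3$ $u$--$v$ path, which coincidences linearity already forbids and which must be excluded explicitly, and then bounding via the Steiner property how many admissible choices each already-used vertex destroys. After that bookkeeping the argument is a routine greedy count plus one application of Azuma's inequality.
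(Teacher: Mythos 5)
Your argument is correct in all essentials, but it takes a genuinely different route from the paper. The paper's proof is a contradiction argument: supposing fewer than $\mu n$ internally vertex-disjoint $u$--$v$ paths exist with internal vertices in $X$, it takes a maximal such collection and uses the concentration machinery already developed (Lemma~\ref{lem_rand_deg} for $e(\{u\},X_i,X_i)$ and Lemma~\ref{lem_edges_largeset} for $e(A,X_2,B)$, after splitting $X$ into three random pieces) to locate one more disjoint path, contradicting maximality. Your approach is more self-contained: first construct deterministically a large ($\Omega(n)$) family of pairwise internally vertex-disjoint $u$--$v$ paths in the Steiner triple system itself, then apply Azuma directly to the indicator sum $Y$ counting which survive into $X$, using that the internal vertex sets are pairwise disjoint (so $Y$ is $2$-Lipschitz in the choices $\mathbf 1[U_i\in\mathcal U_p]$) and $\mathbb{E}[Y]\ge c_0 p^5 n$. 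Both yield a $1-o(1/n)$ bound that supports a union bound over the $O(n^2)$ pairs. What the paper's version buys is that the combinatorial work is pushed into reusable lemmas already proved for the matching argument; what your version buys is that it avoids Lemma~\ref{lem_edges_largeset} entirely and replaces the delicate maximality/switching step with a transparent greedy count, at the cost of the bookkeeping you rightly flag.

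One small correction to that bookkeeping: your list of coincidences to forbid by hand omits $v_2=u_1$ (the mirror image of $v_1=u_3$). Without forbidding it, $u_2=u$ is not automatic: if $v_2=u_1$ then the first and middle edges both contain $\{v_1,v_2\}$, hence by linearity coincide and force $u_2=u$, collapsing the path. Adding $v_2=u_1$ to the forbidden list restores the symmetry and your claim that each forbidden vertex kills only $O(1)$ choices of $v_1$ (resp.\ $v_2$) still holds, so the greedy produces $\Omega(n)$ paths as you assert. With that fix the proposal is sound.
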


To prove this, we need the following consequence of concentration inequalities.

\begin{lemma}\label{lem_rand_deg}
Let $0<1/n \ll p$ and let $X\sim \mathcal{U}_p$. Then with probability $1-o(1/n)$, every vertex $u \in V(G)$ satisfies $e(\{u\},X,X)\geq p^2n/3$.
\end{lemma}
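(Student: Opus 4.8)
The plan is to fix a vertex $u \in V(G)$, count the edges of $G$ that contain $u$, and show that a positive proportion of them have both of their non-$u$ vertices selected into $X$. Since $G$ is a Steiner triple system, every pair $\{x,y\} \subseteq V(G)\setminus\{u\}$ with $\{u,x,y\} \in E(G)$ partitions $V(G)\setminus\{u\}$ into $(n-1)/2$ disjoint pairs, i.e., the link of $u$ is a perfect matching $F_u$ on $n-1$ vertices. So $e(\{u\},X,X)$ counts exactly the number of pairs $\{x,y\} \in F_u$ such that both $x \in X$ and $y \in X$.

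The main step is to estimate, for a single pair $\{x,y\} \in F_u$, the probability that both $x$ and $y$ land in $X$, and then handle the dependencies coming from the singleton-pair partition $\U$. Recall $X \sim \U_p$ is obtained by selecting each block $U_i \in \U$ (with $|U_i| \le 2$) independently with probability $p$. For a pair $\{x,y\} \in F_u$, if $x$ and $y$ belong to the same block of $\U$ then $\PP(x,y \in X) = p$; if they belong to different blocks then $\PP(x,y \in X) = p^2$ by independence. Hence $\EE[e(\{u\},X,X)] \ge p^2 (n-1)/2 \ge p^2 n/2.5$ for $n$ large (using $p^2 \le p$). To get concentration, I would expose the choices block by block and apply Azuma's inequality (Lemma~\ref{lem_azuma}): changing whether a single block $U_i$ is selected changes $e(\{u\},X,X)$ by at most $|U_i| \le 2$, since each vertex of $G$ lies in exactly one pair of $F_u$, so $e(\{u\},X,X)$ is $2$-Lipschitz in the block variables. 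There are at most $n$ blocks, so Azuma gives
\[
\PP\!\left(e(\{u\},X,X) \le \EE[e(\{u\},X,X)] - t\right) \le 2\exp\!\left(-\frac{t^2}{4n}\right).
\]
Taking $t = \tfrac{1}{20} p^2 n$ (say), the exponent is $-\Omega(p^4 n)$, which is $o(1/n^2)$ because $p \ge n^{-o(1)}$ (indeed $p$ is bounded below by a constant in the hypothesis $1/n \ll p$, but the argument survives even for the polylog-sized $p$ used elsewhere). Since $\EE[e(\{u\},X,X)] - t \ge p^2 n/2.5 - p^2 n/20 \ge p^2 n/3$, we get $\PP(e(\{u\},X,X) < p^2 n/3) = o(1/n^2)$ for each fixed $u$.

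Finally, a union bound over the $n$ choices of $u \in V(G)$ yields that with probability $1 - o(1/n)$, every vertex $u$ satisfies $e(\{u\},X,X) \ge p^2 n/3$, as desired. I do not expect any serious obstacle here; the only point requiring a little care is the dependency structure induced by $\U$, which is why one uses Azuma's inequality on the block variables rather than a naive Chernoff bound on the vertices, and the observation that the link of $u$ being a perfect matching is exactly what makes the Lipschitz constant small. One could alternatively phrase the expectation and concentration directly in terms of the i.i.d.\ block indicators, but the Azuma route is cleanest and matches the style of Lemma~\ref{lem_edges_smallset}.
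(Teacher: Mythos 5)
Your proof is correct and follows essentially the same route as the paper: both lower-bound $\mathbb{E}[e(\{u\},X,X)]$ by $p^2(n-1)/2$ using the two cases (same block of $\U$ versus distinct blocks), observe that $e(\{u\},X,X)$ is $2$-Lipschitz in the block indicators, apply Azuma's inequality, and finish with a union bound over $u$. Your framing of the link of $u$ as a perfect matching $F_u$ is a pleasant way to see the $2$-Lipschitz bound, but it is the same argument.
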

\begin{proof}
 For a fixed vertex $u \in V(G)$, let $Y_u := e(\{u\}, X, X)$.
 An edge $e$ containing $u$ contributes to $Y_u$ if and only if $e\setminus \{u\} \in \U_p$ or the two vertices in $e\setminus \{u\}$ are in two disjoint sets in $\U_p$.
 Thus, $e$ is an edge in $G[\{u\},X,X]$ with probability either $p$ or $p^2$, which implies that $p^2(n-1)/2 \leq \mathbb{E}[Y_u] \leq p(n-1)/2$.
 As each event $U_i \in \U_p$ affects at most two edges containing $u$, $Y_u$ is $2$-Lipshitz. This enables us to apply Azuma's inequality (Lemma~\ref{lem_azuma}) to obtain
 \begin{align*}
     \mathbb{P}\left(Y_u < \frac{p^2n}{3}\right) \leq 2\exp \Big(-\frac{p^4 n^2}{10^3 n}\Big) \leq \frac{1}{n^3}.
 \end{align*}
 Thus, the probability that $Y_u\geq p^2n/3$ holds for every $u\in V(G)$ is at least $1-1/n^2=1-o(1/n)$.
\end{proof}

\begin{proof}[Proof of Lemma~\ref{lem_embedding_bare_path}]
 Let $\{X_1, X_2, X_3\}$ be a random partition of $X$ obtained by assigning each $U_i \in \U_p$ to exactly one of $X_i$ independently at random.
 Then each~$X_i$ is a (possibly dependent) copy of $\U_{p/3}$.
 
 By Lemma~\ref{lem_rand_deg}, with probability $1-o(1/n)$, all vertices $u, v \in V(G)$ satisfy $e(\{u\}, X_1, X_1) \geq p^2n/27$ and $e(\{v\}, X_3, X_3) \geq p^2 n/27$. 
 By Lemma~\ref{lem_edges_largeset}, with probability $1-o(1/n)$, for every pair of disjoint subsets $A,B\subseteq V(G) \setminus X_2$ with $|A|,|B|\geq p^2n/100$, we have $e(A,X_2,B) \ge p|A||B|/6$.
  As both of these two events hold simultaneously with probability $1-o(1/n)$, it suffices to show that these two events imply the existence of the desired paths for all pairs of distinct vertices $u,v \in V(G)$.
 
Suppose for a contradiction that there exists a pair $u,v$ of distinct vertices such that there are less than $\mu n$ internally vertex-disjoint $u$--$v$ paths of length three (with their internal vertices contained in $X$). Choose a maximal collection $P_1, P_2, \cdots, P_k$ of internally vertex-disjoint $u$--$v$ paths of length three such that their internal vertices are contained in $X$; then $k< \mu n$.
 Let $Y_i$ be the set of the internal vertices of $P_i$ for $1 \le i \le k$ (so the sets $Y_i$ are pairwise disjoint).
 Let $Y:=\bigcup_{i=1}^k Y_i$, and let $E_1 := E(G[\{u\}, X_1, X_1]) - E(G[\{u\}, Y, X_1])$ and let $E_3 := E(G[\{v\}, X_3, X_3]) -E(G[\{v\}, Y, X_3])$.  As $e(\{u\}, Y, X_1) \le |Y|$ and $e(\{v\}, Y, X_3) \le |Y|$  (since $G$ is linear) and $|Y|\leq 5k \le 5\mu n$, by the conditioned events, we have $|E_1|, |E_3| \ge p^2n/27 - 5\mu n \geq p^2n/30$.

 Let $Z_1$ and $Z_3$ be the set of vertices in the edges of $E_1$ and $E_3$ except $u$ and $v$, respectively.
 Then $|Z_1|, |Z_3| \geq p^2 n/15$.
 By the conditioned events, we have $e(Z_1, X_2, Z_3) \ge p|Z_1||Z_3|/6 \geq p^5 n^2 /2000$.
 As there are at most $5\mu n^2$ edges incident to a vertex in $Y$, $e(Z_1, X_2\setminus Y, Z_3) \geq p^5 n^2 /2000 -  5 \mu n^2$.
 In particular, there is an edge $e$ in $G[Z_1, X_2\setminus Y, Z_3]$.
 Hence, there exists a $u$--$v$ path $P'$ of length three containing $e$ such that $V(P')$ is disjoint from $Y$, i.e., $P'$ is internally vertex-disjoint from the collection $P_1, \cdots, P_k$. Moreover, the internal vertices of $P'$ are contained in $X$. This contradicts the maximality of the collection $P_1, \cdots, P_k$. Hence, the conditioned events, which hold with probability $1-o(1/n)$, imply the existence of the desired collection of paths for all pairs $u,v \in V(G)$ of distinct vertices.
\end{proof}

\section{Stars}\label{sec_stars}
In this section, we will prove that if the minimum degree of a linear hypergraph $G$ is large enough, then we can find vertex-disjoint stars of desired size.

\begin{lemma}\label{lem_embedding_stars}
Let  $0<1/n\ll \varepsilon< \frac{1}{100}$.
 Let $G$ be an $n$-vertex linear $3$-uniform hypergraph with $\delta(G) \geq (1-\varepsilon)n/2$, and let $X=\{v_1, v_2, \ldots, v_\ell\}$ be an independent set in $G$ of size $\ell \leq \varepsilon^2 n/10$. 
 Then for any given positive integers $n_1, n_2, \ldots, n_{\ell}$ such that $\sum_{i=1}^\ell n_i \leq (1-5 \varepsilon)n/2$, $G$ contains 
 vertex-disjoint stars $S_1, S_2, \ldots, S_\ell$ such that each $S_i$ has size $n_i$ and is centered at $v_i$.
\end{lemma}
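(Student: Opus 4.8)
Let $0<1/n\ll \varepsilon< \frac{1}{100}$, $G$ an $n$-vertex linear $3$-graph with $\delta(G)\geq (1-\varepsilon)n/2$, $X=\{v_1,\dots,v_\ell\}$ an independent set of size $\ell\leq \varepsilon^2 n/10$, and $n_1,\dots,n_\ell$ positive integers with $\sum n_i\leq (1-5\varepsilon)n/2$. We must find vertex-disjoint stars $S_i$ of size $n_i$ centered at $v_i$.

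**Plan.** The heart of the matter is that a star of size $n_i$ centered at $v_i$ is precisely a matching of size $n_i$ in the link graph $L_i$ of $v_i$: since $G$ is linear, the link of $v_i$ is a graph on $V(G)\setminus\{v_i\}$ (each pair appears at most once), and it has minimum degree at least $\delta(G)-(\text{something})\geq (1-\varepsilon)n/2 - |X|$ once we delete the other centers, because $X$ is independent so no edge of $G$ through $v_i$ uses another $v_j$ anyway. So a single star is easy; the work is packing $\ell$ of them vertex-disjointly while hitting the exact target sizes $n_i$, and handling the fact that $\sum 2n_i$ can be as large as $(1-5\varepsilon)n$, so the stars together nearly saturate $V(G)$ and there is little slack.

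First I would set this up as an auxiliary bipartite-type / hypergraph matching problem, but in fact I think the cleanest route is greedy with a reservoir argument. Process the centers $v_1,\dots,v_\ell$ in order. Maintaining the invariant that at step $i$ the set $W$ of vertices already used by $S_1,\dots,S_{i-1}$ has size $\sum_{j<i} 2n_j \leq (1-5\varepsilon)n$, I want to extract a matching of size exactly $n_i$ in the link of $v_i$ restricted to the unused vertices $V(G)\setminus(W\cup X)$. The link $L_i$ of $v_i$ is a graph on $n-1$ vertices with at least $\delta(G)\geq(1-\varepsilon)n/2$ edges and minimum degree... here one must be slightly careful: the link graph of $v_i$ need not have large minimum degree, only large size ($\geq (1-\varepsilon)n/2$ edges) and it is a union of vertex-disjoint edges? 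No — the link of $v_i$ in a linear $3$-graph is a graph where every vertex has degree at most... actually in a Steiner-type setting the link is a perfect matching, but here $G$ is only assumed linear with a degree bound, so the link of $v_i$ is simply a graph with $\deg(v_i)\geq(1-\varepsilon)n/2$ edges on $n-1$ vertices, and distinct edges of the link are vertex-disjoint precisely because $G$ is linear — wait, two edges $\{a,b\}$ and $\{a,c\}$ in the link would mean $\{v_i,a,b\}$ and $\{v_i,a,c\}$ are both edges of $G$ sharing the pair... no, sharing only $\{v_i,a\}$, which is allowed. So the link is a general graph. Hmm — then a star of size $n_i$ at $v_i$ is a matching of size $n_i$ in this link graph. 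Since the link has $\geq(1-\varepsilon)n/2$ edges on $n-1$ vertices, by a greedy/König-type bound it has a matching of size at least... a graph with $m$ edges has a matching of size $\geq m/(2\Delta)$, which is too weak. So the real input must be a minimum-degree condition on the link, which does follow: for a vertex $u\neq v_i$, the number of link-edges at $u$ is the number of edges of $G$ containing both $u$ and $v_i$, which is at most $1$ by linearity! So the link graph of $v_i$ is itself a matching (every vertex has link-degree $\leq 1$), of size $\deg(v_i)\geq(1-\varepsilon)n/2$. That is the key structural fact.

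Given that, the proof becomes: the link $L_i$ of $v_i$ is a matching $M_i$ of $\geq(1-\varepsilon)n/2$ edges. We need to choose, for each $i$, a sub-matching $M_i'\subseteq M_i$ of size $n_i$, with all the $M_i'$ pairwise vertex-disjoint (and avoiding $X$). Build these greedily: at step $i$, the edges of $M_i$ meeting $W\cup X$ number at most $|W|+|X| \leq (1-5\varepsilon)n + \varepsilon^2 n/10 \leq (1-4\varepsilon)n$, so the number of edges of $M_i$ disjoint from $W\cup X$ is at least $(1-\varepsilon)n/2 - (1-4\varepsilon)n$... that is negative. So naive greedy fails — this is exactly the main obstacle: near the end, $W$ is huge and a given link matching may be almost entirely inside $W$. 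I would fix this by a smarter accounting: instead of bounding by $|W|$, observe $\sum_i$ over all steps of the number of vertices used is exactly $\sum 2n_i\leq(1-5\varepsilon)n$, and use a defect/Hall-type or flow argument, or process centers in a good order, or — most robustly — apply Pippenger's theorem (\Cref{lem_Pippenger}) or a direct Hall's theorem argument to the auxiliary $(\ell{+}?)$-partite hypergraph whose parts are "$V(G)$" with one vertex $v_i$-token repeated $n_i$ times, encoding simultaneous matchings. Concretely, I expect the intended argument is: form a bipartite graph between "star slots" (there are $\sum n_i$ of them, $n_i$ per center $v_i$) and vertices of $V(G)\setminus X$, joining a slot of $v_i$ to the pair... no — I think one sets up an auxiliary $3$-graph $H$ on vertex set $V(G)\setminus X$ where the edges are exactly the pairs $\{u,w\}$ with $\{v_i,u,w\}\in E(G)$, tagged by $i$, and argues via a Hall-type condition that one can select disjoint edges realizing the quota $n_i$; the condition to check is that for every set $I\subseteq[\ell]$ the union of the link-matchings restricted appropriately covers $\geq 2\sum_{i\in I} n_i$ vertices, which holds because each link-matching alone has $\geq (1-\varepsilon)n \geq 2\sum_{all} n_i / (1-5\varepsilon) \cdot(\ldots)$ endpoints — and here the slack $(1-5\varepsilon)$ versus $(1-\varepsilon)$ is exactly what makes the deficiency version of Hall go through.

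**Key steps, in order.**
\begin{enumerate}[(1)]
\item \emph{Structure of links.} Show that for each $i$, the link graph $L_i:=\{\{u,w\}: \{v_i,u,w\}\in E(G)\}$ is a matching (by linearity of $G$) on $V(G)\setminus\{v_i\}$ with $|L_i|=\deg(v_i)\geq (1-\varepsilon)n/2$. Note each $L_i$ misses all of $X$ as $X$ is independent.
\item \emph{Reformulation.} Finding the stars is equivalent to selecting pairwise vertex-disjoint submatchings $M_i'\subseteq L_i$ with $|M_i'|=n_i$ for all $i$. Consider the auxiliary multigraph / flow network encoding this.
\item \emph{Hall-type / flow feasibility.} Verify the deficiency condition: for any $I\subseteq[\ell]$, one can allocate $n_i$ disjoint link-edges to each $i\in I$ without conflict, using that $|L_i|\geq(1-\varepsilon)n/2$ while the total demand $\sum_i 2n_i\leq(1-5\varepsilon)n$ leaves room $\geq 4\varepsilon n$; translate this into an integral selection via Hall's theorem for bipartite graphs (or via iteratively applying \Cref{lem_Pippenger} / a direct greedy with the correct global counting).
\item \emph{Exact sizes.} Trim to get $|M_i'|=n_i$ exactly (submatchings, so trimming down is free).
\end{enumerate}

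**Main obstacle.** The difficulty is not finding one star but packing $\ell$ of them vertex-disjointly with the quotas nearly saturating $V(G)$; a per-step greedy bound by $|W|$ is too lossy near the end, so Step (3) — getting the global/Hall-type feasibility to exploit the $(1-5\varepsilon)$ vs. $(1-\varepsilon)$ gap and the smallness $\ell\leq\varepsilon^2 n/10$ of the number of centers — is where the real work lies.
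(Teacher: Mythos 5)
You have the right reformulation: since $G$ is linear, two distinct edges through $v_i$ cannot share a second vertex, so the link $L_i=\{\{u,w\}:\{v_i,u,w\}\in E(G)\}$ is a \emph{matching} of size $\deg(v_i)\geq(1-\varepsilon)n/2$, and packing the stars is equivalent to choosing pairwise vertex-disjoint sub-matchings $M_i'\subseteq L_i$ with $|M_i'|=n_i$. You also correctly identify the obstacle: the total demand $\sum 2n_i$ is so close to $n$ that a step-by-step greedy bound by $|W\cup X|$ gives nothing. But at exactly that point your proposal stops being a proof and becomes a wish list of tools, none of which applies as stated.

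The crux --- your Step (3) --- is not a Hall's theorem statement, a network flow, or an instance of Pippenger. Having each $L_i$ cover many vertices (your ``for every $I\subseteq[\ell]$ the union covers $\geq 2\sum_{i\in I}n_i$ vertices'') is \emph{not} a sufficient condition for the desired packing: the constraint is that the chosen edges from different colour classes must be vertex-disjoint, which is a hypergraph-matching-type constraint, not a bipartite Hall condition. (There is no way to encode ``use this edge only if both its endpoints are free'' as a unit-capacity $s$--$t$ flow, and Pippenger's theorem is about near-perfect matchings in nearly-regular hypergraphs, not about hitting prescribed colour quotas.) So you have correctly located where the work lies but have not done it.

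The paper's actual argument is quite different in flavour: it is an alternating-path/switching argument in the spirit of Woolbright and of Brouwer--de Vries--Wieringa. One relaxes the quota to $t_i\leq n_i+m$ with $m\approx\varepsilon^{-1}$, chooses stars maximizing $\sum t_i$, supposes $t_1<n_1$, and then builds an alternating structure $A_1,B_1,A_2,B_2,\dots$ where $A_1$ is the set of unused vertices, $B_i$ consists of the partners (through $v_1$) of $A_i$, and $A_{i+1}$ consists of the matched partners of $B_i$ under the current leaf-pairing. Disjointness of the $A_i$'s and $B_i$'s and the maximality of $\sum t_i$ force each $B_i$ to gain at least $\varepsilon n$ new vertices, so after $m$ rounds you exceed $n$ vertices --- a contradiction. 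Your link-matching observation is compatible with (and implicit in) this argument, but the missing Step (3) is precisely where the augmenting-path idea has to replace the Hall/flow heuristic, and that is the genuine gap in the proposal.
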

\begin{proof}
 Let $m=\lceil  \varepsilon^{-1} \rceil+1$. 
 As $G$ is a linear $3$-graph with $\delta(G)\geq (1-\varepsilon)n/2$, there exists a vertex set $W$ of size at most $\varepsilon n$ such that for all vertices $w\in V(G)\setminus (W\cup \{v_1\})$, there exists an edge $v_1ww'$ in $G$ for some $w'\in V(G)\setminus (W\cup \{v_1\})$.
 
 We then choose  vertex-disjoint stars $S_1, \cdots, S_\ell$ in $G$ which satisfy:
 \begin{enumerate}[(i)]
  \item each $S_i$ is centered at $v_i$ and has size $t_i \leq n_i + m$. \label{condition_1} and
  \item among all $S_1,S_2,\cdots,S_\ell$ that satisfy \eqref{condition_1}, choose one that maximizes $\sum_{i=1}^\ell t_i$.\label{max_t}
 \end{enumerate}
 By allowing empty $S_i$'s, such a choice is always possible.
 We claim that $t_i \geq n_i$ for all $i \in [\ell]$, which concludes the proof.
 Suppose to the contrary that $t_1<n_1$, by reindexing if necessary.
 
 For two vertex sets $U,U'$, denote by $N(U,U')$ the set of vertices $v$ such that there is an edge $uu'v$ of $G$ with $u\in U$ and $u'\in U'$.
 Let $M$ be the graph matching on $V(G)$ where $E(M) = \bigcup_{i\in [\ell]}\{xy : xyv_i \in E(S_i)\}.$ Let $N_M(U)= \bigcup_{u\in U}\{x: xu\in M\}$.
 These definitions are convenient for us to describe how one can replace the stars $S_1,\dots, S_\ell$ with other stars contradicting the maximality \eqref{max_t}.
 Let $A_1:=V(G)\setminus\bigcup_{i=1}^\ell V(S_i)$.
 We then define $A_2, \cdots, A_m, B_1, \cdots, B_m \subseteq V(G)$ recursively as
 \begin{align*}
     B_i := N(A_i,\{v_1\}) ~\text{ and }~ A_{i+1}:=N_M(B_i).
 \end{align*}
   As $|A_1| \geq n-2\sum_{i=1}^\ell (n_i + m) \geq 2\varepsilon n$ and $v_1$ has at least $(1-\varepsilon)n$ neighbors, $v_1$ has at least $\varepsilon n$ neighbors in $A_1$. Thus, $|B_1| \geq \varepsilon n$.
 
 From the definition of $B_i$ and the fact that $G$ is linear, for each $y\in B_i$, we have unique $x\in A_i$ with $xyv_1\in E(G)$. Again, for each $x \in A_i$ with $i>1$, there exists unique $y'\in B_{i-1}$ such that $y'x \in M$.
  Hence, for each vertex $v\in \bigcup_{i\in [\ell]}B_i$, there exists a unique sequence $v=y_k,x_k,y_{k-1},x_{k-1},\dots, y_1,x_1$ such that $\{x_i\}= N(\{y_i\},\{v_1\})$ and $\{y_{i-1}\}= N_M(\{x_i\})$ for all $i\in [\ell]$. As this sequence is unique, this determines the unique $k\in [\ell]$ that $v\in B_k$. From this we conclude that 
  \begin{align*}
     A_i\cap A_j=\varnothing \enspace \text{and} \enspace B_i\cap B_j = \varnothing \text{ for } i\neq j \in [m].
  \end{align*}
 Furthermore, for such a sequence for every vertex $v\in \bigcup_{i\in [\ell]}B_i$, the vertices $y_k,x_k,\dots, y_1,x_1$ are all distinct. As all $A_i$s are pairwise disjoint and all $B_i$s are pairwise disjoint, we only have to show that $x_i\neq y_j$ for all $i\neq j\in [k]$. If not, choose $i$ and $j$ with the minimum $|i-j|>0$ such that $x_i=y_j$.
 Then the definition ensures that $\{x_j\}=\{y_i\} = N(\{x_i\},\{v_1\})$, so
 $x_i=y_j$ and $x_j=y_i$. By symmetry, assuming $i>j$, we know that $i>j+1$.
 If not, then $i=j+1$, but $y_j= x_i=x_{j+1}=N_M(y_j)$, a contradiction that $M$ forms a matching. Moreover, as we have $y_{i-1}=x_{j+1}= N_M(x_i)$ and $i>j+1$, a contradiction to the the minimality of $|i-j|$.

For each $i\in [\ell]$, let $L_i$ be the set of all leaf vertices of $S_i$ and let $L = \bigcup_{i\in [\ell]} L_i$.
We claim that, for  $k\in [m]$, the vertex set $B_k$ is contained in $L$. 
 
 Suppose that $B_k \nsubseteq L$ while $B_{j}\subseteq L$ for all $j<k$.
Choose $y_k\in B_k \setminus L$, and consider the unique sequence $y_k,x_k,
\dots, y_1,x_1$ as above. As all vertices in the sequence are distinct, we use this sequence to contradict the maximality assumption of $\sum_{j=1}^{\ell} t_j$ in~\eqref{max_t}.
 Delete all the edges of the form of $y_ix_{i+1}v_{i'}$ from $S_{i'}$ for $1 \leq i \leq k-1$ and add all edges of the form of $x_iy_iv_1$ for $1 \leq i \leq k$ to $S_1$. Note that the choice of $i'$ is uniquely determined for each $i$.
 As $x_1, y_k$ are not contained in $\bigcup_{j=1}^\ell V(S_j)$ and all $x_i, y_i$ are distinct, $S_1, \cdots, S_\ell$ are still vertex disjoint stars. Moreover, $S_1$ has at most $n_1 + k \leq n_1 + m$ edges.
 Thus, our new $S_1, \cdots, S_\ell$ satisfies the condition \eqref{condition_1}
 and increases $\sum_{j=1}^\ell t_j$  by $1$.
 Therefore, $B_k \subseteq \bigcup_{j=1}^\ell V(S_j)$.

 \begin{figure}
     \centering
     \includegraphics[width=\textwidth]{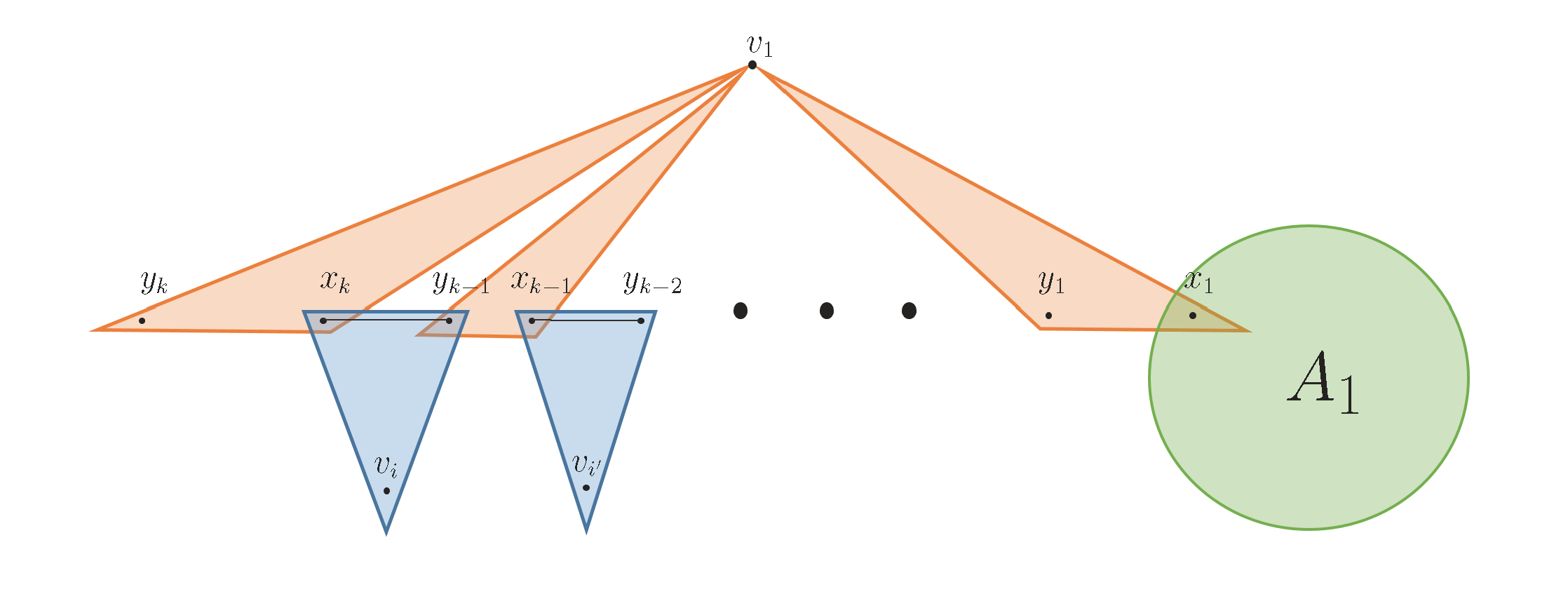}
     \caption{Choice of $x_1, y_1, \ldots, y_k=v$}
     \label{fig:alternating_seq_3}
 \end{figure}
 
As $M$ is a perfect matching on $L$, we have $|A_{i+1}|=|B_i|$ for all $i\in [m]$.
 On the other hand, by the definition of $B_i$, we have $|B_i| = |A_i \setminus W|$ for all $i\in [m]$. 
As we have $|A_1| \geq n-2\sum_{i=1}^\ell (n_i + m) \geq 2\varepsilon n$ and all $A_i$ are pairwise disjoint, we conclude that for each $k$
 \begin{align*}
     \sum_{i\in [k]} |B_i| = \sum_{i\in [k]} |A_{i}\setminus W|
     &\geq |A_1| + \sum_{2\leq i\leq k} |A_i|  - |W|
     \geq 2\varepsilon n + \sum_{1\leq i\leq k-1} |B_i| - \varepsilon n \geq \varepsilon n + \sum_{1\leq i\leq k-1} |B_i|.
 \end{align*}
This yields that 
$$\Big\vert\bigcup_{i\in [m]} B_i \Big\vert= \sum_{i\in [k]} |B_i|
 \geq m \varepsilon n  > n,$$ 
 which concludes the proof by contradiction.
%
%
%
%
\end{proof}

\section{Proof of \Cref{main_thm}}\label{sec_combine}
We begin by stating a variant of the well-known fact which easily follows from a simple greedy algorithm. We omit the proof.
\begin{lemma}\label{lem_small_forest}
    Let $G$ be a linear hypergraph with minimum degree $\delta$.
    Then $G$ contains every tree $T$ with less than $\delta/2$ vertices.
\end{lemma}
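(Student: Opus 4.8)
The plan is to embed $T$ greedily, one edge at a time, using the recursive structure of hypertrees recalled in the introduction. Order the edges of $T$ as $e_1,e_2,\dots,e_m$ so that $e_1$ is arbitrary and, for each $i\ge 2$, the edge $e_i$ meets $V(e_1)\cup\cdots\cup V(e_{i-1})$ in exactly one vertex; if $T$ is a hyperforest one only requires this intersection to have size at most one, and the argument below is identical (a fresh component starts with an edge meeting the previous part in no vertex, which is the $e_1$ case).

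I would construct an injective map $\phi$ from $V(T)$ into $V(G)$ that sends every $e_i$ to an edge of $G$, extending $\phi$ to the new vertices of $e_i$ at step $i$. At the start of step $i$ let $S\subseteq V(G)$ be the image of the already-embedded vertices; since $|V(T)|<\delta/2$ we always have $|S|<\delta/2$. When $e_i$ meets the embedded part in no vertex (in particular when $i=1$), pick any $v'\notin S$, which is possible as $|S|<\delta/2\le n$; when $e_i$ meets it in exactly one vertex $v$, set $v':=\phi(v)$. In either case $\deg_G(v')\ge\delta$ and, since $G$ is linear, any two edges through $v'$ meet only in $v'$, so the edges through $v'$ use $2\deg_G(v')$ distinct further vertices, which split into at least $\delta$ pairwise disjoint pairs.

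The key point is that each vertex $x\in S\setminus\{v'\}$ lies in at most one of these pairs, because by linearity the pair $\{v',x\}$ extends to at most one edge of $G$. Hence at most $|S\setminus\{v'\}|<\delta/2\le\delta$ of the pairs meet $S$, so at least one pair $\{u',w'\}$ is disjoint from $S$. Extend $\phi$ by mapping the (one or two or three) vertices of $e_i$ not yet embedded onto the corresponding vertices among $v',u',w'$; this keeps $\phi$ injective and maps $e_i$ to the edge $\{v',u',w'\}\in E(G)$. After $m$ steps, $\phi$ is an embedding of $T$ into $G$.

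There is essentially no obstacle here: the content is the bound $|S|<\delta/2$, which guarantees the greedy step never fails in either case, combined with the linearity fact that each previously used vertex blocks at most one of the $\ge\delta$ candidate pairs around $v'$. This is exactly why the hypothesis is phrased as $|V(T)|<\delta/2$ rather than in terms of the number of edges of $T$.
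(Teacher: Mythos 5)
Your proof is correct and is exactly the simple greedy argument the paper alludes to and omits: order the hyperedges of $T$ so each meets the previously embedded part in at most one vertex, and at the image $v'$ of that vertex use linearity to observe that the $\ge\delta$ edges through $v'$ give $\ge\delta$ pairwise disjoint candidate pairs, of which each already-used vertex can block at most one, so fewer than $\delta/2$ are blocked and a free pair survives. (In fact this greedy argument works under the weaker hypothesis that $T$ has fewer than roughly $\delta$ vertices; the lemma's $\delta/2$ bound is simply what is convenient for the paper and comfortably covered by your estimate.)
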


We may assume that $\varepsilon$ is small enough. Choose $\mu$ and $n$ so that we have $0<1/n \ll \mu \ll \varepsilon \ll 1$. 
Let $T$ be a hypertree with at most $(1-\varepsilon)n$ vertices.
Applying Lemma~\ref{lem_tree_split} with $D= \lceil \log^{10} n \rceil$ and the chosen $\mu$, we obtain a chain of sub-hypergraphs $T_0 \subseteq T_1 \subseteq \cdots \subseteq T_\ell = T$ with $\ell \leq 10^5 D \mu^{-2}$, $s \in [\ell]$ satisfying the assertions \eqref{it:T0}--\eqref{it:path} of Lemma~\ref{lem_tree_split}. 

By~\eqref{it:T0}, $|V(T_0)| \leq 3\mu n$, 
so there exists an injective homomorphism $\varphi:V(T_0) \rightarrow V(G)$ that embeds~$T_0$ into $G$ by Lemma~\ref{lem_small_forest}. 
In what follows, we shall identify $T_0$ as its image under $\varphi$, i.e., we assume that $T_0$ is embedded as a subgraph of $G$.
Let $\{v_1, \cdots, v_k\} \subseteq V(G)$ be the set of vertices of $T_0$ with $e_{T_1}(v_i, V(T_1 \setminus T_0), V(T_1 \setminus T_0)) \geq D$ and let $d_i:=e_T(v_i, V(T_1 \setminus T_0), V(T_1 \setminus T_0))$ for each $i \in [k]$. Note that~$k$ is at most $ O(n (\log n)^{-10}) \leq \mu n \leq \varepsilon^2 n/10$.

First we remove all vertices in $V(T_0) \setminus \{v_1, \cdots, v_k\}$ from $G$.
This removes at most $3\mu n$ edges incident to a fixed remaining vertex $x\in (V(G) \setminus V(T_0)) \cup \{v_1, \cdots, v_k\}$. 
Furthermore, we remove all the edges containing at least two of $v_1, \cdots, v_k$ from $G$. This removes at most $k/2$ edges incident to any vertex $x \in V(G)$.
Thus, after removing all these edges from $G$, the minimum degree of the resulting hypergraph $H$ is still at least $(1-10 \mu)n/2 \geq (1-10 \mu)|V(H)|/2$.
Let $d:=\sum_{i=1}^k d_i$ and let $n_i := \lceil\frac{d_i}{2d} n (1-\varepsilon/8)\rceil$.
Then $\sum_{i=1}^{k} n_i \leq k+\sum_{i=1}^{k}\frac{d_i}{2d} n (1-\varepsilon/8) \leq  (1-\frac{\varepsilon}{10}) \frac{|V(H)|}{2}$ as $|V(H)| \geq n-3\mu n$.
By applying Lemma~\ref{lem_embedding_stars} to $H$, we get vertex-disjoint stars $S_1, \cdots, S_k$ in $G$ where each $S_i$ is centered at $v_i$ of size $d_i$ and does not contain any vertices of $T_0$ other than $v_i$.

We then define a singleton-pair partition $\U$ of $V(G)$ as follows. For every $1 \le i \le k$ and for every edge $xyv_i \in E(S_i)$, let $\{x, y\}$ be a part of $\U$. Moreover, let each of the remaining vertices of $V(G)$ be a part of $\U$ of size one.

Let $p_{0} :=\varepsilon/200$. Let $m_i := |V(T_i) \setminus V(T_{i-1})|$ and let $p_i := (1+\frac{\varepsilon}{4})\frac{m_i}{n} + \frac{\varepsilon}{4\ell}$ for $i \in [\ell]$.
Recall that $\ell \leq 10^5 D\mu^{-2}= O((\log n)^{10}) $.
As $\varepsilon>0$ is a constant, $p_i \geq \frac{1}{(\log n)^{20}}$ for all $i \in [\ell]$.

Now sample $X_1 \subseteq V(G)$ by choosing each $U\in\U$ with probability $p_1$ independently at random and taking their union, i.e., $X_1\sim \U_{p_1}$. 

Provided that $X_1, X_2, \cdots, X_{i-1}$ are chosen,
select each $U\in\U$ that is not included in $X_1\cup\cdots\cup X_{i-1}$ with probability $p_i/(1-p_1-p_2-\cdots - p_{i-1})$ independently at random and add it to $X_i$. 
After choosing $X_\ell$, we choose the ``reservoir" $R$ by selecting each $U\in\U$ that is not in $X_1 \cup \cdots \cup X_{\ell}$ with probability $p_0/(1-p_1-p_2-\cdots - p_{\ell})$ independently at random and adding it to $R$. 
Note that $\sum_{i=0}^\ell p_i \leq (1+\varepsilon/4)|T|/n + \varepsilon/4 + \varepsilon/200 \leq 1$.
Then $X_i \sim \U_{p_i}$ for each $i \in [\ell]$ and $R \sim \U_{p_0}$.

Let $\mathcal{F}, \mathcal{E}_0, \mathcal{E}_1, \mathcal{E}_M$ be the following events.
\begin{enumerate}
    \item[$\mathcal{F}$:] $|V(T_0) \cap X_i| \leq 10p_i \mu n$ for each $i \in [\ell]$ and $|V(T_0) \cap R| \leq 10p_0 \mu n$.
    \item[$\mathcal{E}_0$:] Every vertex $v$ is incident to at least $10^4 \mu n$ edges $e$ such that $e \setminus \{v\} \subseteq R$ and between every pair $u, v \in V(G)$ of distinct vertices, there are at least $100 \mu n$ internally vertex-disjoint $u$--$v$ paths of length 3 and all of their internal vertices are in $R$.
    \item[$\mathcal{E}_1$:] For each $i \in [k]$, $e(\{v_i\}, X_1, X_1) \geq d_i$.
    \item[$\mathcal{E}_M$:] For each $2\leq i\leq \ell$ and for all sets $A \subseteq V(G) \setminus X_i$ of size $|A| \leq p_i n/2$, there exists a matching $M$ of $G$ of size at least $|A|-\mu p_i n$ such that each edge $e \in M$ satisfies $|e \cap X_i|=2$ and $|e \cap A|=1$.
\end{enumerate}

We first show that all of the events $\mathcal{F}, \mathcal{E}_0, \mathcal{E}_1, \mathcal{E}_M$  hold with high probability and then conditioning on these events, we show that one can embed $T$ into $G$. 
\begin{claim} 
\label{highprobevents}
    $\mathbb{P}(\mathcal{E}_0), \mathbb{P}(\mathcal{E}_1), \mathbb{P}(\mathcal{F}) = 1-o(1)$, and $\mathbb{P}(\mathcal{E}_M) = 1-o(1)$.
\end{claim}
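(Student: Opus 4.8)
The plan is to verify each of the four events separately, with the first three being essentially direct applications of results already proved in the excerpt and the fourth requiring a union bound over the $O(\mathrm{polylog}(n))$ values of $i$.

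\medskip

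\noindent\textbf{Event $\mathcal{F}$.} For each fixed $i\in[\ell]$, the random variable $|V(T_0)\cap X_i|$ counts those parts $U\in\U$ lying inside $V(T_0)$ that get selected into $X_i$; since $|V(T_0)|\le 3\mu n$ and each relevant part is chosen into $X_i$ (marginally) with probability $p_i$, the expectation is at most $3\mu p_i n$. These indicators are not jointly independent because of the sequential selection, but $|V(T_0)\cap X_i|$ is $2$-Lipschitz as a function of the independent choices defining the process, so Azuma's inequality (Lemma~\ref{lem_azuma}) gives that $|V(T_0)\cap X_i|\le 10\mu p_i n$ fails with probability at most $2\exp(-\Omega(\mu^2 p_i^2 n))=o(1/\ell)$, using $p_i\ge (\log n)^{-20}$ and $\ell=O((\log n)^{10})$. (Alternatively one may expose the $X_j$ one at a time and use a Chernoff bound conditionally.) The same argument handles $|V(T_0)\cap R|$. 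A union bound over the $\ell+1$ parts gives $\mathbb{P}(\mathcal{F})=1-o(1)$.

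\medskip

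\noindent\textbf{Events $\mathcal{E}_0$ and $\mathcal{E}_1$.} For $\mathcal{E}_0$, recall $R\sim\U_{p_0}$ with $p_0=\varepsilon/200$ a constant, so $10^4\mu\ll p_0$. The first part of $\mathcal{E}_0$ follows from Lemma~\ref{lem_rand_deg} applied with the singleton-pair partition $\U$ and probability $p_0$ (it gives $e(\{v\},R,R)\ge p_0^2 n/3\ge 10^4\mu n$ for every $v$, with probability $1-o(1/n)$), and the second part is exactly the conclusion of Lemma~\ref{lem_embedding_bare_path} applied with $p=p_0$ and with $100\mu$ in place of $\mu$ (legitimate since $100\mu\ll p_0$); both hold with probability $1-o(1)$, so their intersection does too. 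For $\mathcal{E}_1$, the required bound $e(\{v_i\},X_1,X_1)\ge d_i$ holds for all $i\in[k]$ once $e(\{v_i\},X_1,X_1)\ge p_1^2 n/3$ for all $i$, because $d_i\le 2\ex_T(v_i,\cdot,\cdot)\le$ roughly $m_1\le p_1 n$ (indeed $d_i$ is a single vertex's degree in the stars being attached, which is $o(n)$, while $p_1^2 n/3=\Theta(n/(\log n)^{40})$ could be smaller — so here one should instead invoke Lemma~\ref{lem_edges_largeset}-type control or simply note $d_i\le D+O(\text{per-vertex contribution})$ is polylogarithmic while $p_1^2n/3$ is polynomially larger). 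In any case $\mathcal{E}_1$ follows from Lemma~\ref{lem_rand_deg} with $p=p_1$, giving $\mathbb{P}(\mathcal{E}_1)=1-o(1/n)=1-o(1)$.

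\medskip

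\noindent\textbf{Event $\mathcal{E}_M$ (the main obstacle).} This is where the real work lies, since $\mathcal{E}_M$ must hold simultaneously for every $i$ with $2\le i\le\ell$ and for \emph{all} admissible sets $A$. For a single fixed $i$: we have $X_i\sim\U_{p_i}$ with $p_i\ge(\log n)^{-20}$ (so the hypothesis $p\ge(\log n)^{-20}$ of Lemma~\ref{lem_embedding_matching} is met), and $\mu\ll\varepsilon$ lets us apply Lemma~\ref{lem_embedding_matching} with $\varepsilon$ replaced by $\mu$: this yields, with probability $1-o(1/n)$, that for all $A\subseteq V(G)\setminus X_i$ with $|A|\le p_i n/2$ there is a matching of size $\ge|A|-\mu p_i n$ with each edge meeting $X_i$ in exactly two vertices and $A$ in exactly one. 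Since there are only $\ell=O((\log n)^{10})$ values of $i$, a union bound gives that $\mathcal{E}_M$ holds with probability $1-\ell\cdot o(1/n)=1-o(1)$. The one subtlety to check is that Lemma~\ref{lem_embedding_matching} is stated for $X\sim\U_p$ from a \emph{fixed} singleton-pair partition, and each $X_i$ here is indeed (marginally) distributed as $\U_{p_i}$ for the fixed partition $\U$ defined above; the lemma's proof only uses the marginal law of $X$, so it applies verbatim. Assembling the four bounds, all of $\mathcal{F},\mathcal{E}_0,\mathcal{E}_1,\mathcal{E}_M$ hold simultaneously with probability $1-o(1)$ by a final union bound, which proves Claim~\ref{highprobevents}. \qed
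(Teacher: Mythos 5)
Your treatment of $\mathcal{F}$, $\mathcal{E}_0$, and $\mathcal{E}_M$ matches the paper's argument: Azuma for $\mathcal{F}$, Lemmas~\ref{lem_rand_deg} and~\ref{lem_embedding_bare_path} for $\mathcal{E}_0$, and Lemma~\ref{lem_embedding_matching} with a union bound over $i$ for $\mathcal{E}_M$. But your handling of $\mathcal{E}_1$ has a genuine gap, and it is precisely the place where the singleton-pair construction earns its keep.

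You conclude that $\mathcal{E}_1$ ``follows from Lemma~\ref{lem_rand_deg} with $p=p_1$,'' which would give $e(\{v_i\},X_1,X_1)\geq p_1^2 n/3$ for all $i$. That is not enough. The quantity $d_i$ is the degree of $v_i$ toward $T_1\setminus T_0$, and it can be as large as $\Theta(n)$ (take $T$ to be dominated by one huge star), while the even crude upper bound $\mathbb{E}\,e(\{v_i\},X_1,X_1)\leq p_1 n/2$ from the proof of Lemma~\ref{lem_rand_deg} shows $p_1^2 n/3$ can fall strictly below $d_i$. Your parenthetical remark that $d_i$ is ``polylogarithmic'' is false, and neither of the two fallback ideas you float is developed into an argument. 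The point of the construction is that each star $S_j$ built in $G$ has size $n_j=\lceil \tfrac{d_j}{2d}n(1-\varepsilon/8)\rceil$, tuned so that $p_1 n_j\geq(1+\varepsilon/10)d_j$, and that the partition $\U$ pairs the two leaves of each edge of $S_j$ so that each such edge lands in $G[\{v_j\},X_1,X_1]$ with probability exactly $p_1$ (not $p_1^2$), with these events mutually independent across edges. One then applies the Chernoff bound to this sum of $n_j$ independent Bernoulli$(p_1)$ indicators, using $d_j\geq D\geq(\log n)^{10}$ to beat the union bound over $j\in[k]$. Without invoking the pairing and the tuned star sizes $n_j$, you cannot compare $e(\{v_i\},X_1,X_1)$ against $d_i$, so this part of the claim is unproved in your write-up.
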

\begin{proof}
Firstly, \Cref{lem_embedding_bare_path,lem_rand_deg} imply that $\mathbb{P}(\mathcal{E}_0) = 1-o(1/n)$.
Secondly, by Azuma's inequality (Lemma~\ref{lem_azuma}) we have
$$\mathbb{P}(\mathcal{F}^c) \leq \sum_{i=0}^\ell 2\exp\left(-\frac{p_i^2 \mu^2 n^2}{10n}\right) \leq o\Big(\frac{1}{n}\Big),$$
where the last inequality follows from the fact that $p_i\geq \frac{\varepsilon}{4\ell} \geq \frac{1}{(\log n)^{20}}$ for each $i$.
Now we show $\mathbb{P}(\mathcal{E}_1) = 1-o(1)$.
As $T_1\setminus T_0$ consists of the stars $S_1,\cdots,S_k$ that have $2d$ leaves in total, $m_1=|V(T_1)\setminus V(T_0)|=2d$.
Hence, the expected value of $e(\{v_i\}, X_1, X_1)$ is
\begin{align*}
    p_1 n_i \geq \left(1+\frac{\varepsilon}{4}\right)\frac{m_1}{n}
    \cdot \left(1-\frac{\varepsilon}{8}\right) \frac{d_in}{2d}
    \geq \left(1+\frac{\varepsilon}{4}\right)\frac{2d}{n}
     \cdot \left(1-\frac{\varepsilon}{8}\right) \frac{d_in}{2d}  \geq \left(1+\frac{\varepsilon}{10}\right) \cdot d_i.
\end{align*}
Indeed, for any edge in $S_i$, the probability that its two leaves are included in $X_1 \sim \mathcal U_{p_1}$ is equal to $p_1$, as they form a part (of size two) in the singleton-pair partition $\mathcal U$.
Moreover, these events for all edges in $S_i$ are mutually independent. This enables us to use the Chernoff bound (\Cref{lem_chernoff}) to prove that $e(\{v_i\}, X_1, X_1)$ is less than $2d_i$ with probability at most 
$$2\exp\Big(-\frac{\varepsilon^2 d_i}{10^3}\Big) \leq \exp\Big(-\frac{\varepsilon^2 \log^{10} n}{10^3}\Big) = o\Big(\frac{1}{n}\Big),$$
where the inequality follows from $d_i\geq D\geq \log^{10}n$ which is guaranteed by~\eqref{it:T1} of Lemma~\ref{lem_tree_split}.
Therefore, $\mathbb{P}(\mathcal{E}_1) = 1-o(1)$. Finally, \Cref{lem_embedding_matching} together with a union bound ensures that $\mathbb{P}(\mathcal{E}_M) = 1-o(\frac{\ell}{n}) = 1-o(1)$.
\end{proof}
By Claim~\ref{highprobevents}, the events $\mathcal{F}, \mathcal{E}_0, \mathcal{E}_1$, and $\mathcal{E}_M$ occur with probability $1-o(1)$.
Now we show that there exists a sequence $T_0\subseteq T_1\subseteq \dots \subseteq  T_\ell = T$ of subgraphs of $G$
satisfying the following properties for all $i \in [\ell]$ (conditioning on the events $\mathcal{F}, \mathcal{E}_0, \mathcal{E}_1$, and $\mathcal{E}_M$).
\begin{enumerate}[(a)]
    \item \label{algoa} If $i\neq s+1$, then $T_i$ extends $T_{i-1}$ and it satisfies $V(T_i)\setminus V(T_{i-1}) \subseteq X_i \cup R$ and $|(V(T_i)\setminus V(T_{i-1})) \cap R| \leq 30 p_i \mu n$. 
    \item \label{algob} If $i=s+1$, then $T_{s+1}$ extends $T_{s}$ and it satisfies $V(T_{s+1})\setminus V(T_{s}) \subseteq R$ and 
    $|V(T_{s+1})\setminus V(T_{s})|\leq 7\mu n$.
\end{enumerate}

As $T_0$ vacuously satisfies the above two properties, assume that we have $T_0\subseteq \dots \subseteq T_{i-1}$ satisfying the above properties with the maximum $i$, and assume $i\leq \ell$.
Since $T_0\subseteq \dots \subseteq T_{i-1}$ satisfy \eqref{algoa} and \eqref{algob}, we have 
\begin{align}\label{eq: TR size}
    |V(T_{i-1})\cap R| \leq |V(T_0)| + \sum_{j<i,j\neq s} 30 p_j \mu n + 7\mu n \leq 40 \mu n.
\end{align}

If $i=1$, by using vertices in $X_1 \cap S_j$ for each $j \in [k]$, one can extend $T_0$ to $T_1$ by $\mathcal{E}_1$. It is straightforward to check $V(T_1)\setminus V(T_{0}) \subseteq X_1 \cup R$ and $|(V(T_1)\setminus V(T_{0})) \cap R|=0 
$, as we only used vertices in $X_1$ to extend $T_0$ to $T_1$.
Hence, \eqref{algoa} is satisfied, a contradiction to the maximality of $i$. Hence we have $i>1$.

Suppose $2 \leq i  \le \ell$ and $i\neq s+1$.
By~\eqref{it:matching} of Lemma~\ref{lem_tree_split}, in this case, $T_i$ can be obtained by adding a matching to $T_{i-1}$ such that $V(T_i) \setminus V(T_{i-1})$ is a matching leaf set of $T_i$. Let $A_i$ be the set of vertices of $T_{i-1}$ 
that are contained in the edges of the matching $E(T_i)\setminus E(T_{i-1})$.
Then $|A_i| = m_i/2$.
By $\mathcal{E}_M$, there exists a matching $M$ in $G$ of size at least $|A_i|-\mu p_i n$ such that each edge $e$ satisfies $|e \cap A_i| = 1$ and $|e \cap X_i| = 2$.
Now remove all the edges that are incident to a vertex in $T_0$ from $M$ to produce a matching $M'$. By $\mathcal{F}$, $|M| - |M'| \le 10 p_i \mu n$.
Adding the edges of $M'$ to $T_{i-1}$ then yields a partial embedding of $T_i$, where at most $11 p_i \mu n$ edges of $T_i$ are not embedded yet. We now show that one can use vertices of $R$ to embed these edges (which would then show that the first part of~\eqref{algoa} is satisfied).

As every vertex $v$ in $A_i$ is incident to at least $10^4 \mu n $ edges $e$ such that $e \setminus \{v\} \subseteq R$ by $\mathcal E_{0}$, \eqref{eq: TR size} ensures that one can greedily choose edges to complete the embedding of $T_i$ by using at most $2(11 p_i \mu n) \leq 30 p_i \mu n$ vertices in $R$ i.e. $|(V(T_i)\setminus V(T_{i-1})) \cap R| \leq 30 p_i \mu n$, so~\eqref{algoa} is satisfied, a contradiction to the maximality of $i$.

Suppose $i=s+1$. By \eqref{it:path} of Lemma~\ref{lem_tree_split}, in this case, $T_{s+1}$ is obtained by adding at most $\mu n$ vertex-disjoint bare paths of length 3 to $T_{s}$. Conditioning on $\mathcal{E}_0$, for every pair of vertices $u, v \in V(G)$, there are at least $100\mu n $ internally vertex-disjoint $u$--$v$ paths (of length 3) such that all of their internal vertices are in $R$.
By \eqref{eq: TR size}, the embedding of $T_{i-1}$ already used at most $40 \mu n$ vertices from $R$, so for any pair of vertices $u, v \in V(G)$, there are at least $60 \mu n$ internally vertex-disjoint $u$--$v$ paths (of length 3) remaining. As each path of length 3 uses at most 7 vertices in $R$, one can greedily find at most  $\mu n$ vertex-disjoint paths (of length 3) that are required for embedding $T_{s+1}$ in $G$. As we have used at most $7 \mu n$ vertices of $R$, \eqref{algob} is satisfied, a contradiction to the maximality of $i$. 

Hence, we have $i-1 = \ell$, and this completes the proof of \Cref{main_thm}.

\medskip
\noindent{\bf Concluding remarks.}
As noted before, a complete graph on $n$ vertices contains all possible trees on at most $n$ vertices.
This simple fact motivated the tree packing conjecture of Gy\'arf\'as and Lehel~\cite{gyarfas1978packing},
which
states that for $n \in \mathbb{N}$, any given set of trees $T_1, T_2, \ldots, T_n$ with $|V(T_i)|=i$ can be packed into the complete graph $K_n$. 
This notorious conjecture since 1976 has driven a lot of research and it still remains open (see e.g., \cite{allen2021tree, joos2019optimal} for results towards this conjecture). 

As our theorem guarantees that an $n$-vertex Steiner triple system contains all possible hypertrees with at most $(1-o(1))n$ vertices, it is natural to ask if a corresponding ``packing" statement for Steiner triple systems also holds. 
This question was in fact already asked by Frankl, as recorded in~\cite{Elliott2019}, in the following form:
what is the largest integer $s$ such that any $s$ hypertrees $T_3, T_5, T_7, \ldots, T_{2s+1}$ with $|V(T_i)|=i$ can be packed into every $n$-vertex Steiner triple system? Indeed, this question is a natural analogue of the tree packing conjecture, since every hypertree contains an odd number of vertices. Frankl showed that any given set of hypertrees $T_3, T_5, T_7, \ldots, T_{(n+3)/2}$ can be packed into every $n$-vertex Steiner triple system but it is not known if a larger set of hypertrees can be embedded. 

By analyzing our proof more carefully with some modifications, one may prove a minimum-degree version of our theorem. That is, there exists some $\delta>0$ such that the following holds: If $\varepsilon = \Omega(n^{-\delta})$ and $n$ is sufficiently large, then every $n$-vertex linear $3$-graph $G$ with the minimum degree at least $n(\frac{1}{2} - (\frac{\varepsilon}{\log n})^{100})$ contains any hypertree $T$ with at most $(1- \varepsilon )n$ vertices.
By repeatedly applying this to a Steiner triple system and deleting low degree vertices, one can also show that any given set of hypertrees $T_{n - j-t\log^{100} n}$, $j=0,2,4,\cdots,2t$,
with $|V(T_{i})|=i$ pack into every $n$-vertex Steiner triple system for an appropriate choice of $t = \Theta( \frac{n}{\mathrm{polylog}(n)})$. 

\vspace{3mm}

\noindent{\bf Acknowledgement.}
SI is supported by the POSCO Science Fellowship of POSCO TJ Park Foundation, the Institute for Basic Science (IBS-R029-C4), and in part by National Research Foundation of Korea (NRF) grant funded by the Korea government MSIT NRF-2022R1C1C1010300. 
JK is supported by  the POSCO Science Fellowship of POSCO TJ Park Foundation. 
JL is supported by National Research Foundation of Korea (NRF) grant funded by the Korea government MSIT NRF-2022R1C1C1010300, Samsung STF Grant SSTF-BA2201-02, and Institute for Basic Sciences (IBS-R029-C4).
AM is supported by the European Research
Council (ERC) under the European Union's Horizon 2020 research and innovation programme (grant agreement no.\ 786198) and also in part by the EPSRC, grant no.\ EP/S00100X/1 (A.~Methuku).

\begin{appendices}
\section{Hypertree splitting}\label{sec_tree_partition}
The very first step towards our proof of~\Cref{main_thm} is~\Cref{lem_tree_split}, which splits the given hypertree $T$ into ``manageable" pieces.
Our proofs in this section will closely follow that of~\cite{Montgomery2020}, whose first step is the following lemma for $2$-graphs. 
For a (graph) tree $T$, a path $P$ in $T$ is a \emph{bare path} if all the internal vertices of $P$ have degree two. 

\begin{lemma}[\cite{Montgomery2020}]\label{lem_leave_path}
    Let $\ell, m \geq 2$ be integers and let $T$ be a tree with at most $\ell$ leaves.
    Then there exist vertex-disjoint bare paths $P_1, P_2, \cdots, P_s$ of length $m$ such that 
    \begin{align*}
        |V(T-P_1-P_2-\cdots-P_s)| \leq 6m\ell + \frac{2|V(T)|}{m+1},
    \end{align*}
    where $T-P$ denotes the graph obtained by removing all of internal vertices of $P$ from $T$.
\end{lemma}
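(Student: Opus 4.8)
The plan is to split $T$ at its high-degree vertices and then chop the resulting pieces into bare paths of length exactly $m$. Let $B$ be the set of \emph{branching vertices} of $T$, i.e.\ those of degree at least $3$, and let $\Lambda$ be the set of leaves of $T$. From the handshake identity $\sum_{v\in V(T)}(\deg_T v-2)=2\abs{E(T)}-2\abs{V(T)}=-2$ one gets $\sum_{v\in B}(\deg_T v - 2)=\abs{\Lambda}-2$, so $\abs{B}\le \abs{\Lambda}-2\le \ell-2$ whenever $T$ has at least one edge. If $\abs{V(T)}\le 6m\ell$ there is nothing to prove (take $s=0$), so we may assume $\abs{V(T)}>6m\ell\ge 24$, which in particular forces $T$ to have edges and hence the bound on $\abs{B}$ to hold.

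Deleting $B$ from $T$ leaves a graph of maximum degree at most $2$, hence a disjoint union of paths; equivalently, $E(T)$ partitions into \emph{maximal bare paths} $Q_1,\dots,Q_r$, where each $Q_j$ runs between two vertices of $B\cup\Lambda$ and all of its internal vertices have degree exactly $2$ in $T$. Suppressing the degree-$2$ vertices of $T$ turns $T$ into a tree on vertex set $B\cup\Lambda$ with exactly one edge per $Q_j$, so $r=\abs{B}+\abs{\Lambda}-1\le 2\ell$. Moreover every vertex of $T$ lies in $B\cup\Lambda$ or is an internal vertex of a unique $Q_j$, so, writing $m_j$ for the edge-length of $Q_j$, we have $\sum_{j=1}^r (m_j-1)=\abs{V(T)}-\abs{B\cup\Lambda}\ge \abs{V(T)}-2\ell$.

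Next, for each $j$ I would restrict to the sub-path $Q_j'$ of $Q_j$ spanned by its $m_j-1$ internal vertices and greedily cut $Q_j'$ into $t_j:=\floor{(m_j-1)/(m+1)}$ vertex-disjoint sub-paths of length $m$. Each such sub-path is a bare path of $T$ because its internal vertices are internal vertices of $Q_j$ and hence of degree $2$; and since the $Q_j'$ are pairwise vertex-disjoint and avoid $B\cup\Lambda$, the whole list $P_1,\dots,P_s$ (with $s=\sum_j t_j$) consists of pairwise vertex-disjoint bare paths of length $m$. Deleting each $P_i$ removes its $m-1$ internal vertices, so $\abs{V(T-P_1-\cdots-P_s)}=\abs{V(T)}-(m-1)s$.

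It remains to bound $s$ from below. Using $\floor{x}\ge x-1$, $r\le 2\ell$, and the estimate above,
\begin{align*}
 s=\sum_{j=1}^r\floor{\frac{m_j-1}{m+1}}\ge \frac{1}{m+1}\sum_{j=1}^r(m_j-1)-r\ge \frac{\abs{V(T)}-2\ell}{m+1}-2\ell .
\end{align*}
If the right-hand side is non-positive then $\abs{V(T)}\le 2\ell(m+2)\le 4m\ell$ and the bound is immediate; otherwise, using $\tfrac{m-1}{m+1}=1-\tfrac{2}{m+1}$ and $(m-1)\tfrac{m+2}{m+1}\le 2m$,
\begin{align*}
 (m-1)s\ge \frac{(m-1)\abs{V(T)}}{m+1}-2\ell(m-1)\paren{\frac{1}{m+1}+1}\ge \abs{V(T)}-\frac{2\abs{V(T)}}{m+1}-4m\ell ,
\end{align*}
so $\abs{V(T-P_1-\cdots-P_s)}\le \frac{2\abs{V(T)}}{m+1}+4m\ell\le 6m\ell+\frac{2\abs{V(T)}}{m+1}$, as required. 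The only points needing care are checking that the extracted paths are genuinely bare and globally vertex-disjoint — which is precisely why one works inside the interiors of the maximal bare paths — together with the degenerate small-tree cases. I do not expect a serious obstacle here, since this is essentially the $2$-graph statement of~\cite{Montgomery2020}; the substantive work of the paper lies in transferring such reductions to the linear $3$-graph setting.
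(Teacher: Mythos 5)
This lemma is stated in the paper only as a cited result from Montgomery--Pokrovskiy--Sudakov; the paper provides no proof of it, so there is no in-text argument to compare against. Your blind reconstruction is correct: bounding the number of branching vertices by $\ell-2$ via the handshake identity, observing that $E(T)$ decomposes into at most $|B|+|\Lambda|-1\le 2\ell$ maximal bare paths $Q_j$, extracting $\lfloor(m_j-1)/(m+1)\rfloor$ vertex-disjoint length-$m$ sub-paths from the interior of each $Q_j$, and then summing. The arithmetic checks out (in fact you obtain $4m\ell$ in place of $6m\ell$), the extracted paths are genuinely bare and pairwise vertex-disjoint because they live inside the pairwise disjoint interiors of the $Q_j$, and the degenerate cases ($s=0$ when $|V(T)|\le 6m\ell$, and the case where the lower bound on $s$ is non-positive) are handled. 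This is the standard degree-two suppression argument and matches the approach in~\cite{Montgomery2020} in substance; there is nothing to flag.
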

By considering the breadth-first search (BFS) tree of the line graph of a hypertree, this lemma can be adapted for hypertrees. To that end, we need to specify the corresponding definitions for 3-uniform hypergraphs. 
A \emph{semi-bare path} $P$ in a hypertree $T$ is a path such that 
edges in $T\setminus E(P)$ are only incident to the vertices in its end pairs.
This is a weaker notion than a bare path (in hypertrees); 
in \Cref{fig:bare_path}, green edges plus the right red edge form a semi-bare path but not a bare path.
For a semi-bare path $P$, denote by $T-P$ the hypergraph obtained from $T$ by removing all vertices of $P$ except the vertices in the edges that contains one of the end pairs of $P$.

\begin{lemma}\label{lem_leave_path2}
 Let $\ell, m \geq 2$ be integers. 
 Let $T$ be a hypertree with at most $\ell$ leaf edges.
 Then there exist edge-disjoint semi-bare paths $P_1, P_2, \cdots, P_s$ of length $m+1$ such that
 \begin{align*}
     e(T-P_1-P_2-\cdots-P_s)\leq 6m\ell + \frac{2e(T)}{m+1}.
 \end{align*}
\end{lemma}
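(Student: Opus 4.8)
The plan is to reduce \Cref{lem_leave_path2} to its $2$-graph counterpart \Cref{lem_leave_path} by passing to the line graph of the hypertree and running a breadth-first search. Concretely, let $L=L(T)$ be the graph whose vertices are the edges of $T$ and whose edges join two hyperedges of $T$ sharing a vertex. Since $T$ is a hypertree, $L$ is connected, and moreover $L$ is itself a (graph) tree: any cycle in $L$ would yield a cyclic sequence of hyperedges in $T$, contradicting the fact that $T$ is linear and any two vertices of $T$ are joined by a unique path (so no ``hyper-cycle'' exists). Thus $L$ is a tree with $e(T)$ vertices. The first step is to bound the number of leaves of $L$: a leaf of $L$ is a hyperedge $f$ of $T$ that meets only one other hyperedge, and I claim the number of such $f$ is at most the number of leaf edges of $T$ (a hyperedge meeting at most one other hyperedge has at least two vertices of degree one, hence is a leaf edge of $T$), so $L$ has at most $\ell$ leaves. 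Actually one must be slightly careful — a hyperedge can be a leaf edge of $T$ (two degree-one vertices) while still being a non-leaf of $L$ only if it meets $\geq 2$ other edges, which is impossible for a $3$-edge with two degree-one vertices; conversely every leaf of $L$ is a leaf edge — so in fact $L$ has at most $\ell$ leaves, as claimed.

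Next I would apply \Cref{lem_leave_path} to $L$ with parameters $\ell$ and $m$, obtaining vertex-disjoint bare paths $Q_1,\dots,Q_s$ in $L$, each of length $m$ (so with $m+1$ vertices, i.e.\ $m+1$ hyperedges of $T$), such that $|V(L - Q_1 - \cdots - Q_s)| \le 6m\ell + \tfrac{2|V(L)|}{m+1} = 6m\ell + \tfrac{2e(T)}{m+1}$. The key translation step is to check that a bare path $Q_j$ in $L$ corresponds to a semi-bare path $P_j$ of length $m+1$ in $T$: the $m+1$ consecutive hyperedges $f_0, f_1, \dots, f_m$ of $T$ forming the vertices of $Q_j$ are pairwise consistently intersecting, and the internal hyperedges $f_1,\dots,f_{m-1}$ are bare vertices of $L$, meaning each $f_i$ ($1\le i\le m-1$) meets only $f_{i-1}$ and $f_{i+1}$ among all hyperedges of $T$. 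Consequently the only vertices of $T$ on these internal hyperedges that can be incident to edges outside the path are the two shared vertices $f_0\cap f_1$ and $f_{m-1}\cap f_m$ — wait, more precisely the vertices of $f_0$ and $f_m$ — and this is exactly the defining property of a semi-bare path: edges of $T\setminus E(P_j)$ are incident only to vertices in the end pairs (which live inside $f_0$ and $f_m$). The path $P_j$ has length $m+1$ because it consists of $m+1$ edges. The paths $P_1, \dots, P_s$ are edge-disjoint since the $Q_j$ are vertex-disjoint in $L$.

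Finally I would verify the count. Removing the internal vertices of $Q_j$ from $L$ deletes exactly $m-1$ vertices of $L$, i.e.\ $m-1$ hyperedges of $T$; and the definition of $T - P_j$ for a semi-bare path removes precisely the vertices of $T$ lying only on those internal hyperedges, which deletes exactly the $m-1$ corresponding hyperedges from $T$ while retaining $f_0$ and $f_m$. Hence $e(T - P_1 - \cdots - P_s) = |V(L - Q_1 - \cdots - Q_s)| \le 6m\ell + \tfrac{2e(T)}{m+1}$, as required. The main obstacle I anticipate is the bookkeeping in the translation step — making sure that ``bare'' in $L$ (internal vertex has degree two in $L$, i.e.\ meets exactly two other hyperedges) translates correctly to the semi-bare condition in $T$, and that the vertex-deletion operation on $L$ matches the edge-deletion operation $T - P$ exactly, with no off-by-one error in the path lengths (length $m$ in $L$ versus length $m+1$ in $T$) or in which endpoint hyperedges are retained. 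Handling the degenerate cases (e.g.\ $T$ a single edge, or paths sharing endpoint hyperedges) also needs a sentence of care, but these follow the same reasoning as in~\cite{Montgomery2020}.
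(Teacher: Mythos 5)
Your reduction to \Cref{lem_leave_path} via the line graph is the right framework, and the translation dictionary you describe (leaves of the auxiliary graph correspond to leaf edges of $T$; a bare path of length $m$ corresponds to a semi-bare path of length $m+1$; the vertex/edge bookkeeping) matches the paper's. But there is a genuine error at the foundation of the argument: you assert that the line graph $L=L(T)$ of a hypertree is itself a tree, and this is false. Whenever a vertex $v$ of $T$ lies in $k\ge 3$ hyperedges, those $k$ hyperedges are pairwise adjacent in $L$ and form a $K_k$. The simplest counterexample is the hypertree consisting of three edges all through one common vertex: its line graph is a triangle. Your purported justification (``any cycle in $L$ would yield a cyclic sequence of hyperedges in $T$'') is where the reasoning breaks: a cycle $e_1,e_2,e_3$ in $L$ in which all three share the same vertex does not trace out any cycle among vertices of $T$. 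Since $L$ is not a tree, \Cref{lem_leave_path} cannot be applied to it as stated, so the whole reduction collapses at that point.

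The paper's fix is to run a breadth-first search on $L$ (there called $\mathcal{G}$) and apply \Cref{lem_leave_path} to the BFS tree $T_{\mathcal{G}}$. Note that this is not merely a cosmetic repair: with an arbitrary spanning tree, an internal vertex of a bare path could have tree-degree $2$ while still having additional non-tree neighbours in $L$ (i.e.\ the corresponding hyperedge could meet more than two other hyperedges of $T$), in which case the lifted path in $T$ would not be semi-bare. The BFS structure is what lets one argue that a degree-$2$ vertex of $T_{\mathcal{G}}$ on a bare path has no such extra $L$-neighbours, so that the semi-bare condition actually holds; similarly, the BFS (together with the choice of a non-leaf root) is used to get the clean ``leaf of $T_{\mathcal{G}}\,\Leftrightarrow\,$ leaf edge of $T$'' correspondence. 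If you replace ``$L$ is a tree, apply \Cref{lem_leave_path} to $L$'' with ``take a BFS tree $T_{\mathcal{G}}$ of $L$ rooted at a non-leaf edge, apply \Cref{lem_leave_path} to $T_{\mathcal{G}}$, and use the BFS property to verify the two translation claims,'' the rest of your write-up goes through and becomes essentially the paper's proof.
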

\begin{proof}
    If all the edges of $T$ are leaf edges, then $e(T)=\ell$, so the conclusion trivially holds.
    We may thus assume that there is a non-leaf edge $e^* \in E(T)$.
    Let $\mathcal{G}$ be the auxiliary (2-)graph on the edge set $E(T)$ where $e_1$ and $e_2$ are adjacent if they intersect.
    
    Let $T_\mathcal{G}$ be the BFS tree of $\mathcal G$ rooted at $e^*$.
    Then a leaf of $T_\mathcal{G}$ corresponds to a leaf edge of $T$.
    Indeed, an edge $e$ of $T$ is a leaf edge if and only if it has only one vertex incident to other edges. 
    Thus, if there exists a parent $e'$ of $e$ in the rooted tree $T_\mathcal{G}$, then 
    the other edges that intersect with $e$ also intersect with~$e'$ and the BFS puts them in the same depth as $e$. This makes $e$ a leaf vertex in $T_\mathcal{G}$.
    Conversely, if $e$ is not a leaf edge in $T$, then deleting $e$ disconnects the hypertree, which also disconnets the BFS tree $T_\mathcal{G}$. In particular, $e$ is not a leaf vertex in $T_\mathcal{G}$.
    Since the root $e^*$ of $T_\mathcal{G}$ is not a leaf edge in $T$, an edge $e\in E(T)$ is a leaf edge if and only if it is a leaf (vertex) in $T_\mathcal{G}$.
    Therefore, $T_\mathcal{G}$ has at most $\ell$ leaves.

    Moreover, a bare path of length $m$ in $T_\mathcal{G}$ corresponds to a semi-bare path of length $m+1$ in~$T$. 
    Indeed, the $m-1$ internal vertices of a bare path of length $m$ in $T_\mathcal{G}$ form edges of a bare path of~$T$ because of the BFS, which uniquely extends to a semi-bare path of length $m+1$. 
    Conversely, the hyperedges of a semi-bare path in $T$ becomes the vertex set of a bare path.
    Therefore, applying Lemma~\ref{lem_leave_path} to $T_\mathcal{G}$ yields the desired result.
\end{proof}

We are now ready to prove Lemma~\ref{lem_tree_split}.

\begin{proof}[Proof of Lemma~\ref{lem_tree_split}]
We construct a decreasing sequence $T=T'_0 \supseteq T_1'\supseteq \cdots \supseteq T_\ell'$, which will yield the desired increasing sequence $T_k = T_{\ell-k}'$.
 Let $m:=\lceil 10^3/\mu \rceil$. Starting from $T$, we iteratively remove a matching leaf set of size at least $\mu n / (50 m D)$ as many times as possible.
 There are at most $50mD/\mu$ such iterations, so starting from $T_0'=T$ gives the resulting tree $T_{k}'$, $k\leq 50mD/\mu$.

We say that a non-leaf vertex $v$ in a leaf edge is the \emph{parent} of the edge or simply a parent if it is a parent of a leaf edge.
 By the choice of $T_{k}'$, there are at most $\mu n / (50 m D)$ parent vertices. For every vertex $v$ which is a parent of at least $D$ leaf edges, remove all the leaf edges incident to $v$ to obtain a smaller hypertree $S$. 
 Then $S$ contains at most $\mu n /50 m$ leaf edges, since each leaf edge in $S$ is either a leaf edge in $T_k'$ which shares its parent with at most $D$ leaf edges or it contains a parent in $T_k'$.

 Then by \Cref{lem_leave_path2}, $S$ contains edge-disjoint semi-bare paths $P_1, \cdots, P_r$ of length $m+1$ such that $e(S-P_1-\cdots-P_r) \leq 3\mu n /25  + 2e(S)/(m+1) \leq \mu n/2$ since $S$ has at most $\mu n /50 m$ leaf edges.
 As there are at most $\mu n /(50mD)$ vertices in $S$ whose degrees differ from theirs in $T_k'$,
 at least $\max\{0, r-\mu n / (50 m D)\}$ of the semi-bare paths $P_1,P_2,\cdots,P_r$ are still semi-bare paths of $T_{k}'$.
 Let $Q_1, \ldots, Q_{r'}$ be such semi-bare paths with $r'\geq \max\{0, r-\mu n/(50mD)\}$.
 As each of the $Q_i$'s contains a bare path of length three in the middle, we can remove the vertices in these bare paths to obtain $T_{k+1}'$. Indeed, as $Q_i$'s are semi-bare paths that are edge-disjoint, they are vertex-disjoint except the vertices in their end pairs, thus the removed bare paths are vertex-disjoint.

 Starting from $T_{k+1}'$, at each step we delete one leaf edge from the remaining edges in each $Q_i$ except the edges at the end of $Q_i$. This yields $T_{k+2}',T_{k+3}',\cdots,T_{k+m-3}'$, where each $T_{k+t+1}'$ is obtained by deleting vertex-disjoint leaf edges from $T_{k+t}'$.

The final step from $T_{k+m-3}'$ to $T_{k+m-2}'$ essentially repeats what we did to obtain $S$.
 For every vertex that is a parent of at least $D$ leaf edges in $T'_{k+m-3}$, remove all the leaf edges incident to $v$ to obtain $T_{k+m-2}'$ and take $\ell=k+m-2$.
 Then $\ell \leq m + 50 m D / \mu \leq 10^5 D \mu^{-2}$.
 
 It remains to prove that $e(T_{k+m-2}') \leq \mu n$.
 First, recall that $e(S-P_1-\cdots-P_r) \leq \mu n/{2}$.
 Also, if a vertex is a common parent of $D$ leaf edges in $T_{k}'$, then it is so in $T_{k+m-3}'$ as well. Thus, if an edge in $T_{k+m-3}'$ is not removed when obtaining $T_{k+m-2}'$, then it must have remained when obtaining $S$ from $T_{k}'$, i.e., $E(T_{k+m-2}') \subseteq E(S)$.
 Thus, $T_{k+m-2}'$ is contained in both $S$ and $T_{k+m-3}'=T_{k}'-Q_1-Q_2-\cdots -Q_{r'}$,
 which implies that
 \begin{align*}
     e(T_{k+m-2}') \leq e(S-Q_1-\cdots-Q_{r'}) \leq e(S-P_1-\cdots-P_r) + m \cdot \mu n / (50 m D) \leq \mu n. \tag*{\qedhere}
 \end{align*}
 This completes the proof of the lemma.
\end{proof}

\end{appendices}

\end{document}